\algnewcommand\algorithmicinput{\textbf{Input:}}
\algnewcommand\Input{\item[\algorithmicinput]}
\algnewcommand\algorithmicoutput{\textbf{Output:}}
\algnewcommand\Output{\item[\algorithmicoutput]}
\newcommand{\R}{\mathbb{R}}
\newcommand{\bA}{\mathbf{A}}
\newcommand{\bS}{\mathbf{S}}
\newcommand{\bP}{\mathbf{P}}
\newcommand{\caC}{\mathcal{C}}
\DeclareMathOperator{\tr}{tr}
\DeclareMathOperator{\diag}{diag}
\DeclareMathOperator{\GL}{GL}
\DeclareMathOperator{\argmin}{argmin}
\DeclareMathOperator{\err}{err}
\DeclarePairedDelimiter{\abs}{\lvert}{\rvert}
\DeclarePairedDelimiter{\norm}{\lVert}{\rVert}
\newtheorem{theorem}{Theorem}[section]
\newtheorem{lemma}[theorem]{Lemma}
\newtheorem{proposition}[theorem]{Proposition}
\theoremstyle{definition}
\newtheorem{definition}[theorem]{Definition}
\newtheorem{assumption}[theorem]{Assumption}
\numberwithin{equation}{section}
\title{Accelerating operator Sinkhorn iteration with overrelaxation}
\author{Tasuku Soma\thanks{The Institute of Statistical Mathematics, Tokyo, 190-8562, Japan} \and Andr\'e Uschmajew\thanks{Institute of Mathematics \& Centre for Advanced Analytics and Predictive Sciences, University of Augsburg, 86159 Augsburg, Germany}}
\date{}
\begin{document}

\maketitle
\begin{abstract}
We propose accelerated versions of the operator Sinkhorn iteration for operator scaling using successive overrelaxation. We analyze the local convergence rates of these accelerated methods via linearization, which allows to determine the asymptotically optimal relaxation parameter based on Young's SOR theorem. Using the Hilbert metric on positive definite cones, we also obtain a global convergence result for a geodesic version of overrelaxation in a specific range of relaxation parameters. These techniques generalize corresponding results obtained for matrix scaling by Thibault et al.~(\textit{Algorithms}, 14(5):143, 2021) and Lehmann et al.~(\textit{Optim.~Lett.}, 16(8):2209–2220, 2022). Numerical experiments demonstrate that the proposed methods outperform the original operator Sinkhorn iteration in certain applications. 
\end{abstract}

\section{Introduction}

In \emph{operator scaling}, we are given matrices $A_1, \dots, A_k \in \R^{m \times n}$ and the goal is to find a joint transformation
\[
 \bar A_i = L A_i R^\top
\]
by square invertible matrices $L \in \GL_m(\R)$ and $R \in \GL_n(\R)$ such that
\begin{align}
 \sum_{i=1}^k \bar A_i^{} \bar A_i^\top  &= \sum_{i=1}^k  L A_i^{} R^\top R  A_i^\top L^\top =  \frac{1}{m}I_m \label{eq: OSa} \\
 \intertext{and}
 \sum_{i=1}^k \bar A_i^\top \bar A_i^{} &= \sum_{i=1}^k R A_i^\top L^\top L A_i R^\top = \frac{1}{n}I_n. \label{eq: OSb}
\end{align}
The operator scaling problem has been first introduced by Gurvits~\cite{Gurvits2004} (in the complex setting) and has later found various applications in non-commutative polynomial identity testing~\cite{Garg2019}, computational invariant theory~\cite{Allen-Zhu2018}, functional analysis~\cite{Garg2018,Sra2018}, scatter estimation~\cite{Franks2020,Drton2021}, and signal processing~\cite{Barthe1998,Kwok2018}. A special case of operator scaling is well-known \emph{matrix scaling}~\cite{Sinkhorn1967} and various properties of matrix scaling can be generalized to operator scaling; see the survey paper by Idel~\cite{Idel2016} for more details.

Note that \emph{scaling}, that is, the left and right multiplication by $L$ and $R$, can be understood as a group action of $\GL_m(\R) \times \GL_n(\R)$ on the space $(\R^{m \times n})^k$ of matrix tuples $(A_1,\dots,A_k)$. It will be useful to introduce a notation $\bA = (A_1,\dots,A_k)$ for the elements of $(\R^{m \times n})^k$ and denote by $
L \bA R^\top = (L A_1 R^\top, \dots, L A_k R^\top)$ the action of $(L,R) \in \GL_m(\R) \times \GL_n(\R)$ on $\bA$. Operator scaling then asks for a member $\bar{\bA}$ in the orbit of~$\bA$ that satisfies~\eqref{eq: OSa}--\eqref{eq: OSb}. Thus, we can regard~\eqref{eq: OSa}--\eqref{eq: OSb} either as a nonlinear system of equations in $(\R^{m \times n})^k$ for finding $\bar{\bA}$ in the orbit of $\bA$, or as a system of equations in $\GL_m(\R) \times \GL_n(\R)$ for finding scaling matrices $L$ and~$R$.

The \emph{operator Sinkhorn iteration}~\cite{Gurvits2004} is a simple iterative method for solving the operator scaling problem. Generalizing the classical Sinkhorn iteration~\cite{Sinkhorn1967} for matrix scaling, it is based on the observation that each of the two conditions~\eqref{eq: OSa} and~\eqref{eq: OSb} alone is easy to satisfy through one-sided scaling, provided that $\sum_{i=1}^k A_i A_i^\top$ and $\sum_{i=1}^k A_i^\top A_i$ are invertible. The method hence proceeds by solving the equations in an alternating manner until a fixed point is reached. Depending on the viewpoint onto the problem, the operator Sinkhorn iteration can be presented in various different ways: as a fixed-point iteration either for sequences $(L_t,R_t)$ of scaling matrices, a fixed-point iteration of positive definite matrices $(X_t, Y_t) = (L_t^\top L_t^{}, R_t^\top R_t^{})$, or as a fixed-point iteration for sequences $(\bA_t)$ in the orbit of $\bA$. These different versions of the algorithm, however, turn out to be essentially equivalent (see Section~\ref{sec: operator Sinkhorn}).

As will be explained in Section~\ref{sec: alternating optimization}, the operator scaling problem can also be studied from the perspective of optimization. Indeed, the solution $(X,Y) = (L^\top L, R^\top R)$ can be characterized as the minimizer of a geodesically convex function on cones of positive definite matrices. As such, the operator scaling problems actually constitutes a cornerstone in the area of \emph{noncommutative optimization}. In this viewpoint, the operator Sinkhorn iteration then can be interpreted as an interesting instance of an alternating minimization method.

\paragraph*{Our contribution.}
The goal of this work is to study \emph{accelerated} versions of the operator Sinkhorn iteration based on \emph{successive overrelaxation} (SOR). This is a classic approach both for linear and nonlinear systems of equations. Notably, for systems of linear equations, the SOR method is a linear fixed-point iteration with a relaxation parameter that combines old and new iterates and can be optimized (under certain conditions) for achieving the optimal convergence rate; see,~e.g.,~\cite[Chapters~3 and~4]{Hackbusch2016}. For nonlinear fixed-point iterations, similar ideas can be applied, but the accelerated convergence can typically be proved only locally based on linearization; see,~e.g.,~\cite{Schechter1962,Ortega1970,Hageman1975} for some classic references.

In the main section of this work (Section~\ref{sec: overrelaxation}), we propose several variants of overrelaxation for the operator Sinkhorn iteration. Each variant corresponds to a particular choice of the underlying space on which we ``combine'' old and new iterates. The most straight-forward way is taking affine combinations of iterates, that is, $(L_t, R_t)$ or $(X_t, Y_t)$, in the Euclidean sense. A slightly more sophisticated way is first applying a coordinate transform (e.g., matrix logarithm) to iterates and then taking affine combinations in the new coordinate. Finally, one may take a more geometric approach using geodesics on the space of positive definite matrices with respect to the well-known \emph{Hilbert (projective) metric}~\cite{Lemmens2012} for forming ``affine combinations'' along geodesics. Each variant has different characteristics, e.g., its domain and iteration complexity. Nevertheless, we show that near a fixed-point these variants are actually equivalent in the first-order sense, which allows to analyze their \emph{local} convergence in a unified way. We will deduce a familiar formula for the spectral radius of the linearized SOR iteration which then allows to determine the (asymptotically) optimal relaxation parameter along known lines (Theorem~\ref{thm: local convergence}). While our approach to analyzing the local convergence of nonlinear SOR is by no means new, and in fact follows a similar pattern as in recent work~\cite{Thibault2021,Lehmann2022} on matrix scaling, executing it rigorously for the operator Sinkhorn iteration requires several nontrivial technical steps, which we believe are of independent interest and provide additional insight into the problem.

The question of \emph{global} convergence is challenging in general for nonlinear SOR methods. For the geodesic version of our SOR methods we are able to show the global convergence for a limited range of the relaxation parameter (Theorem~\ref{thm: global convergence overrelaxation}). The starting point for this result is the contractivity of the standard Sinkhorn iteration in the Hilbert metric (under certain assumptions on $A_1,\dots,A_k$), which is classic for matrix scaling~\cite{Franklin1989} and extended recently for operator scaling~\cite{Georgiou2015,Idel2016}. In~\cite{Lehmann2022} it has been shown for matrix scaling that the contractivity is preserved when using underrelaxation or very mild overrelaxation. Extending this results to the SOR operator Sinkhorn iteration is not completely straightforward. For example, the analysis in~\cite{Lehmann2022} relies on the fact that in matrix scaling the underlying Hilbert metric can be expressed as the total variation semi-norm in the auxiliary log-space. To our knowledge, a similar property does not hold in operator scaling. To this end, we develop a novel metric inequality of the Hilbert metric for positive definite matrices, which allows us to bypass the auxiliary log-space and apply the technique of~\cite{Lehmann2022} nevertheless. To the best of our knowledge, this metric inequality (Lemma~\ref{lem:Hilbert-omega}) is not known in the literature and might be of independent interest.

Finally, we conduct numerical experiments to demonstrate the effectiveness of our accelerated methods, based on an adaptive choice of an almost optimal relaxation parameter. For an exemplary application in frame scaling, we show that the SOR methods achieve several magnitudes faster convergence than the original operator Sinkhorn iteration. To provide a complete picture, we also discuss the numerical behaviour of the SOR algorithms for operator scaling with ill-conditioned input matrices.

\paragraph*{Previous work.}
The operator Sinkhorn iteration was proposed by Gurvits~\cite{Gurvits2004} and he showed its global convergence for a class of input matrices called the Edmonds--Rado class. The global convergence of the general case was resolved by~\cite{Garg2019}; they showed that the operator Sinkhorn iteration converges in $O(1/\sqrt{t})$ rate, where $t$ is the number of iterations. Allen-Zhu~et~al.~\cite{Allen-Zhu2018} proposed the box-constrained Newton method for operator scaling and showed its global linear convergence. However, the per-iteration computational complexity is much higher than for operator Sinkhorn iteration. The Hilbert metric and the closely related Thompson metric have been applied to the analysis of fixed-point iterations in matrix variables~\cite{Lemmens2012,Sra2013,Sra2015,Georgiou2015}. Recently, Weber and Sra~\cite{Weber2023} applied the Thompson metric to fixed-point iterations for computing the Brascamp-Lieb constant, which can be reduced to operator scaling~\cite{Garg2018}. Our work generalizes~\cite{Gabriel2019,Thibault2021,Lehmann2022} where relaxation has been proposed for accelerating matrix scaling, together with a local, and in case of~\cite{Lehmann2022} also global, convergence analysis. The technical difficulty in the local analysis is taking a certain group equivariance of the problem into account which leads to non-positive definite Hessians. This methodology has been formalized in~\cite{Oseledets2023}, which contains an explicit version of Young's SOR theorem for $2\times 2$ positive semidefinite block systems. Our proposed heuristic for choosing a near optimal relaxation parameter already appeared in~\cite{Hageman1975} and was used in~\cite{Lehmann2022,Oseledets2023} as well.

\paragraph*{Outline.}
The rest of this paper is organized as follows. Section~\ref{sec: operator Sinkhorn} reviews the standard operator Sinkhorn iteration and its equivalent descriptions as a fixed-point iteration for the scaling matrices $(L,R)$, as a fixed-point iteration on the space of positive definite matrices, or as an alternating minimization algorithm. In addition, the basics of the Hilbert metric are introduced. In the main section (section~\ref{sec: overrelaxation}) several versions for overrelaxation of the operator Sinkhorn iteration are introduced (Sections~\ref{sec: overrelaxation Euclidean}--\ref{sec: overrelaxation Hilbert metric}) and their local convergence is studied (Section~\ref{sec: local convergence}). For a geodesic SOR version the global convergence in the Hilbert metric for a limited range of relaxation parameters is established as well (Section~\ref{sec: global convergence}). Section~\ref{sec: numerical experiments} presents numerical experiments and Section~\ref{sec: conclusion} contains a conclusion.

\section{Operator Sinkhorn iteration}\label{sec: operator Sinkhorn}

The operator Sinkhorn algorithm is based on the fact that it is very easy to satisfy only one of the relations in~\eqref{eq: OSa} and~\eqref{eq: OSb} by an appropriate left or right scaling. The working assumption for this is:

\begin{assumption}\label{assumption 1}
The matrices $\sum_{i=1}^k A_i^{} A_i^\top$ and $\sum_{i=1}^k A_i^\top A_i^{}$ are invertible.
\end{assumption}
This is indeed a necessary condition for the existence of a solution to the operator scaling problem.
To see this, note that left and right scalings preserve the invertibility of $\sum_{i=1}^k A_i^{} A_i^\top$ and $\sum_{i=1}^k A_i^\top A_i^{}$.
So, if one of these matrices is singular, then so is the scaled one and hence it cannot be equal to the (normalized) identity matrix. Under this assumption, consider a factorization
\[
 \sum_{i=1}^k A_i^{} A_i^\top = C C^\top
\]
with invertible $C \in \GL_m(\R)$, then choosing
\begin{equation}\label{eq: update rule for L}
 L = \frac{1}{\sqrt{m}} C^{-1}
\end{equation}
leads to the desired equation
\[
 \sum_{i=1}^k (L A_i^{}) (L A_i)^\top = \frac{1}{m} C^{-1} \left( \sum_{i=1}^k A_i^{} A_i^\top \right) C^{-\top} = \frac{1}{m} C^{-1} C C^\top C^{-\top} = \frac{1}{m} I_m.
\]
Similarly, decomposing
\[
 \sum_{i=1}^k A_i^{\top} A_i^{} = D D^\top
\]
with invertible $D \in \GL_n(\R)$ and setting
\begin{equation}\label{eq: update rule for R}
 R = \frac{1}{\sqrt{n}} D^{-1}
\end{equation}
immediately yields
\[
 \sum_{i=1}^k (A_i R^\top)^{\top} (A_i^{} R^\top) = \frac{1}{n} D^{-1} \left( \sum_{i=1}^k A_i^{\top} A_i^{} \right) D^{-\top} = \frac{1}{n} D^{-1} D D^\top D^{-\top} = \frac{1}{n} I_n.
\]

As the goal is to satisfy both relations simultaneously, the operator Sinkhorn algorithm produces sequences $(L_t)$, $(R_t)$ in an alternating way. Here, several versions are possible. A natural first version is denoted as Algorithm~\ref{algo: OSI}. It is not difficult to show that under Assumption~\ref{assumption 1} all steps in this algorithm are feasible (in exact arithmetic). In other words, Assumption~\ref{assumption 1} remains valid for $\bar{\bA}_t$ for all $t$. The proof (by induction) can be based on the following observation.

\begin{algorithm}[t]
\small
\caption{Operator Sinkhorn Iteration (OSI)}
\label{algo: OSI}
\begin{algorithmic}[1]
%\Require
\Input
Initial matrix collection $\bar{\bA}_0 = (\bar A_{0,1},\dots,\bar A_{0,k}) = \bA$ satisfying Assumption~\ref{assumption 1}
%\Output
\For
{$t = 0,1,2,\dots$}
    \State
    Compute a decomposition
    \[
    \sum_{i=1}^k \bar A_{t,i}^{} \bar A_{t,i}^\top = \bar C_t^{} \bar C_t^\top
    \]
    with $\bar C_t \in \GL_m(\R)$ and set
    \[
    \bar L_{t+1} = \frac{1}{\sqrt{m}} \bar C_t^{-1}.
    \]
    \State
    Compute a decomposition
    \[
    \sum_{i=1}^k \bar A_{t,i}^{\top} \bar L_{t+1}^\top \bar L_{t+1}^{} \bar A_{t,i}^{} = \bar D_t \bar D_t^{\top}
    \]
    with $\bar D_t \in \GL_n(\R)$ and set
    \[
    \bar R_{t+1} = \frac{1}{\sqrt{n}} \bar D_t^{-1}.
    \]
    \State 
    Set
    \[
    \bar{\bA}_{t+1} = \bar L_{t+1}^{} \bar{\bA}_t^{} \bar R_{t+1}^\top.
    \]
\EndFor
\end{algorithmic}
\end{algorithm}

\begin{lemma}\label{lem: assumption 1}
Given matrices $B_1,\dots,B_k$ such that $\sum_{i=1}^k B_i^\top B_i$ is positive definite. Then for any symmetric positive definite $M$ (of appropriate size) the matrix $\sum_{i=1}^k B_i^\top M B_i$ is positive definite  as well.
\end{lemma}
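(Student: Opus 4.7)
The plan is to reduce the statement to the given positive definiteness of $\sum_{i=1}^k B_i^\top B_i$ by a simple sandwiching estimate based on the smallest eigenvalue of $M$.

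First I would observe that $\sum_{i=1}^k B_i^\top M B_i$ is symmetric, so the task is to show strict positivity of the associated quadratic form on nonzero vectors. For any vector $x$ we can rewrite
\[
x^\top \!\left( \sum_{i=1}^k B_i^\top M B_i \right)\! x = \sum_{i=1}^k (B_i x)^\top M (B_i x),
\]
which moves $M$ into a standard quadratic form evaluated at the vectors $B_i x$.

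Next I would invoke the positive definiteness of $M$ in the form $M \succeq \lambda_{\min}(M)\, I$ with $\lambda_{\min}(M) > 0$. Applying this pointwise gives
\[
\sum_{i=1}^k (B_i x)^\top M (B_i x) \;\ge\; \lambda_{\min}(M) \sum_{i=1}^k \norm{B_i x}^2 \;=\; \lambda_{\min}(M)\, x^\top \!\left( \sum_{i=1}^k B_i^\top B_i \right)\! x.
\]
By hypothesis the right-hand side is strictly positive whenever $x \neq 0$, so the left-hand side is too, which yields positive definiteness.

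I do not expect any real obstacle; the only care needed is to make sure one uses the positive definiteness of $M$ (rather than just positive semidefiniteness) so that $\lambda_{\min}(M) > 0$, and to note that the argument is genuinely an equivalence in some sense: the hypothesis on $\sum_i B_i^\top B_i$ is the special case $M = I$, so the lemma really just says that this particular positive definiteness is preserved under any positive definite reweighting $M$.
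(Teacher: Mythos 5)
Your proof is correct and uses essentially the same idea as the paper: both reduce to the hypothesis by exploiting positive definiteness of $M$ on the vectors $B_i x$. The paper phrases it as a contradiction argument (if $\sum_i x^\top B_i^\top M B_i x = 0$ then each $B_i x = 0$, contradicting the hypothesis), whereas you give a direct quantitative lower bound via $\lambda_{\min}(M)$; the difference is purely stylistic.
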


\begin{proof}
Assuming this is false, there exists $x \neq 0$ such that $\sum_{i=1}^k x^\top B_i^\top M B_i x = 0$. Since $M$ is positive definite,  this implies $B_i x = 0$ for every $i = 1,\dots,k$. Then $x^\top \left( \sum_{i=1}^k B_i^\top B_i \right) x = \sum_{i=1}^k x^\top B_i^\top B_i x = 0$, contradicting the assumption.
\end{proof}

Note that the computed $\bar{\bA}_t$ is related to the original $\bA$ by
\[
 \bar{\bA}_{t}^{} = L_t^{} \bA^{} R_t^\top
\]
with
\begin{equation}\label{eq: formula Lt Rt}
 L_t = \bar L_t \bar L_{t-1} \cdots \bar L_1, \qquad R_t = \bar R_t \bar R_{t-1} \cdots \bar R_1.
\end{equation}
If the final scaling matrices $L_t$ and $R_t$ are required for further use, they hence can be computed on the fly when executing Algorithm~\ref{algo: OSI} using recursive updates $L_{t+1} = \bar L_{t+1} L_t$ and $R_{t+1} = \bar R_{t+1} R_t$.

\subsection{Fixed-point iteration for scaling factors}
Algorithm~\ref{algo: OSI} appears difficult to analyze as an iteration for the scaling matrices $\bar L_t$ and $\bar R_t$ because the matrices $\bar A_{t,1},\dots,\bar A_{t,k}$ in the update formulas~\eqref{eq: update rule for L} and~\eqref{eq: update rule for R} change in every step. It should hence be rather regarded and analyzed as an iteration for the scaled matrices $\bar{\bA}_t$, a path that however we do not follow here. Instead, we consider a modified version denoted as Algorithm~\ref{algo: FFPI}, in which the matrices $A_{1},\dots,A_k$ remain unaltered.

\begin{algorithm}[t]
\small
\caption{Factor Fixed-point Iteration}
\label{algo: FFPI}
\begin{algorithmic}[1]
%\Require
\Input
Matrix collection $\bA$ satisfying Assumption~\ref{assumption 1} and initial matrix $R_0 \in \GL_n(\R)$.
%\Output
% \State Initialize scaling factors $L_0 = I_m$, $R_0 = I_n$.
\For
{$t = 0,1,2,\dots$}
    \State
    Compute a decomposition
    \[
    \sum_{i=1}^k A_{i}^{} R_t^\top R_t^{} A_{i}^\top = C_t^{} C_t^\top
    \]
    with $C_t \in \GL_m(\R)$ and set
    \[
    L_{t+1} = \frac{1}{\sqrt{m}} C_t^{-1}.
    \]
    \State
    Compute a decomposition
    \[
    \sum_{i=1}^k A_{i}^{\top} L_{t+1}^\top L_{t+1}^{} A_{i}^{} = D_t^{} D_t^\top,
    \]
    with $D_t \in \GL_n(\R)$ set
    \[
    R_{t+1} = \frac{1}{\sqrt{n}} D_t^{-1}.
    \]
\EndFor
\end{algorithmic}
\end{algorithm}

This algorithm is also well-defined under Assumption~\ref{assumption 1}. It implicitly generates another sequence
\[
 \bA_t^{} = L_t^{} \bA R_t^\top.
\]
of scaled matrices $(A_{t,1},\dots,A_{t,k})$. It turns out that up to possible orthogonal transformations, it is the same as the sequence $\bar{\bA}_t$ generated in Algorithm~\ref{algo: OSI}. In this sense, both versions of the algorithms are in fact equivalent.

\begin{proposition}\label{prop: equivalence of algorithms}
Given Assumption~\ref{assumption 1}, let $\bar{\bA}_t$ be the sequence generated recursively in the operator Sinkhorn iteration (Algorithm~\ref{algo: OSI}), and let $\bA_t = L_t \bA R_t$ be sequences of matrix tuples generated by the fixed-point iteration (Algorithm~\ref{algo: FFPI}) with initial matrix $R_0 = I_m$, respectively. Then there exist sequences of orthogonal matrices $P_t$ and $Q_t$ with $P_0 = I$ and $Q_0 = I$ such that
\[
 \bA_t^{} = P_t^{} \bar{\bA}_t^{} Q_t^\top.
\]
\end{proposition}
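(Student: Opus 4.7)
\medskip

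The plan is induction on $t$, turning on the fact that any Cholesky-type decomposition $M = CC^\top$ with $C \in \GL_m(\R)$ is unique only up to a right orthogonal factor: if $CC^\top = C'C'^\top$ with $C,C' \in \GL_m(\R)$, then $U := C^{-1}C'$ satisfies $UU^\top = I$, so $C' = CU$ and $C'^{-1} = U^\top C^{-1}$. This is the mechanism by which the different choices of the factors $\bar C_t, C_t$ (and $\bar D_t, D_t$) made inside the two algorithms are reconciled by orthogonal conjugation. The base case $t=0$ is immediate: with the natural convention $L_0 = I_m$ and $R_0 = I_n$ (the subscript in the statement of the proposition is a typo), both $\bA_0$ and $\bar{\bA}_0$ equal $\bA$, and we take $P_0 = Q_0 = I$.

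For the inductive step, assume $\bA_t = P_t \bar{\bA}_t Q_t^\top$ with $P_t, Q_t$ orthogonal. Written entrywise this is $L_t A_i R_t^\top = P_t \bar A_{t,i} Q_t^\top$, i.e.\ $A_i R_t^\top = L_t^{-1} P_t \bar A_{t,i} Q_t^\top$. Substituting into the defining equation for $C_t$ in Algorithm~\ref{algo: FFPI} and collapsing the middle factor via $Q_t^\top Q_t = I$ yields
\[
C_t C_t^\top \;=\; \sum_{i=1}^k A_i R_t^\top R_t A_i^\top \;=\; L_t^{-1} P_t \Bigl(\sum_{i=1}^k \bar A_{t,i} \bar A_{t,i}^\top\Bigr) P_t^\top L_t^{-\top} \;=\; \bigl(L_t^{-1} P_t \bar C_t\bigr)\bigl(L_t^{-1} P_t \bar C_t\bigr)^\top,
\]
where $\bar C_t$ is the Cholesky factor from Algorithm~\ref{algo: OSI}. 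By the uniqueness lemma above there exists an orthogonal $U_{t+1}$ with $C_t = L_t^{-1} P_t \bar C_t U_{t+1}$, and consequently $L_{t+1} = \tfrac{1}{\sqrt m} C_t^{-1} = U_{t+1}^\top \bar L_{t+1} P_t^\top L_t$. A parallel computation for the second sub-step, inserting the resulting identity $L_{t+1} A_i = U_{t+1}^\top \bar L_{t+1} \bar A_{t,i} Q_t^\top R_t^{-\top}$ into $\sum_i A_i^\top L_{t+1}^\top L_{t+1} A_i = D_t D_t^\top$ and using $U_{t+1}^\top U_{t+1} = I$, produces in the same way an orthogonal $V_{t+1}$ with $R_{t+1} = V_{t+1}^\top \bar R_{t+1} Q_t^\top R_t$.

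It remains to multiply out. Taking the transpose of the formula for $R_{t+1}$ and combining with the expression for $L_{t+1} A_i$ gives
\[
L_{t+1} A_i R_{t+1}^\top \;=\; U_{t+1}^\top \bar L_{t+1} \bar A_{t,i} Q_t^\top R_t^{-\top} R_t^\top Q_t \bar R_{t+1}^\top V_{t+1} \;=\; U_{t+1}^\top \bar L_{t+1} \bar A_{t,i} \bar R_{t+1}^\top V_{t+1} \;=\; U_{t+1}^\top \bar A_{t+1,i} V_{t+1},
\]
so that $\bA_{t+1} = U_{t+1}^\top \bar{\bA}_{t+1} V_{t+1}$. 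Setting $P_{t+1} := U_{t+1}^\top$ and $Q_{t+1} := V_{t+1}^\top$ closes the induction.

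The proof itself is linear algebra, so the work is conceptual rather than hard; I expect the main obstacle to be bookkeeping, namely keeping track of which orthogonal factors live on the left versus the right at each stage and verifying that the inherited factors $P_t, L_t$ (on the left) and $Q_t, R_t$ (on the right) cancel at exactly the right moments against the freshly introduced $U_{t+1}, V_{t+1}$ and the $R_t^{-\top} R_t^\top$ in the middle. Once the non-uniqueness of the square-root decomposition is articulated as a single auxiliary lemma, the two sub-steps become completely symmetric and the computation above essentially writes itself.
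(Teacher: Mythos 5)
Your proof is correct and takes essentially the same inductive approach as the paper: you isolate the non-uniqueness of the factorization $M = CC^\top$ up to a right orthogonal factor as the driving lemma, then propagate orthogonal conjugators through the two half-steps exactly as the paper does (the paper's $P_{t+1},Q_{t+1}$ play the role of your $U_{t+1}^\top,V_{t+1}^\top$). You are also right that $R_0 = I_m$ in the statement is a typo for $R_0 = I_n$, and the implicit reading $\bA_t = L_t \bA R_t^\top$ is the intended one.
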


The proof is by induction and is presented for completeness in Appendix~\ref{sec: proof proposition}. Observe that the operator scaling problem is itself subject to an orthogonal invariance: if $\bar{\bA}$ solves~\eqref{eq: OSa}--\eqref{eq: OSb}, then so does $Q \bar{\bA} P^\top$ for any orthogonal $Q$ and $P$. Likewise, simply applying an orthogonal scaling $P \bA Q^\top$ to any ``approximate'' solution $\bA$ will not change the distance to a true solution in any reasonable orthogonally invariant error measure. For example, we have
\[
 \left\| \sum_{i=1}^k P A_i Q^\top (PA_iQ^\top)^\top - \frac{1}{m} I \right\| = \left\| P \left( \sum_{i=1}^k A_i A_i^\top - \frac{1}{m} I \right) P^\top \right\| = \left\| \sum_{i=1}^k A_i A_i^\top - \frac{1}{m} I \right\|
\]
in any unitarily invariant matrix norm. In this light, Proposition~\ref{prop: equivalence of algorithms} allows us to focus our attention in the following entirely on Algorithm~\ref{algo: FFPI}, which appears easier to investigate.

Both Algorithm~\ref{algo: OSI} and~\ref{algo: FFPI} still allow for several different realizations, since it has not been specified how to choose the factors $\bar C_t$ and $\bar D_t$ (resp.~$C_t$ and $D_t$) in Algorithm~\ref{algo: OSI} (resp.~Algorithm~\ref{algo: FFPI})  in the factorization steps. From the computational perspective, one should choose the most efficient factorizations, e.g., based on the Cholesky decomposition. However, from the theoretical perspective, this non-uniqueness poses a potential obstacle to the analysis of the algorithms and should desirably be removed.

We focus on Algorithm~\ref{algo: FFPI}. A very natural way to overcome the issue of non-uniqueness is to investigate the sequences
\[
 X_t = L_t^\top L_t^{} \quad \text{and} \quad  Y_t = R_t^\top R_t^{}
\]
of symmetric positive definite (PD) matrices. Different from the scaling factors $(L_t,R_t)$ themselves, they are independent from the particular choice of the decomposition factors $C_t$ and $D_t$. This observation allows to interpret Algorithm~\ref{algo: FFPI} as an alternating fixed-point iteration between the two cones
\[
 \caC_m = \{ X \in \R^{m \times m}_{\mathrm{sym}} : \text{$X$ is a PD matrix} \} \quad \text{and} \quad \caC_n = \{ Y \in \R^{n \times n}_{\mathrm{sym}} : \text{$Y$ is a PD matrix} \},
\]
where and in the following $\R^{m \times m}_{\mathrm{sym}}$ denotes the linear space of real symmetric $m \times m$ matrices. To formulate this fixed-point iteration, consider the linear operators
\[
 \Phi : \R^{n \times n}_{\mathrm{sym}} \to \R^{m \times m}_{\mathrm{sym}}, \quad \Phi(Y) = \sum_{i=1}^k A_i Y A_i^\top
\]
and
\[
 \Phi^* : \R^{m \times m}_{\mathrm{sym}} \to \R^{n \times n}_{\mathrm{sym}}, \quad \Phi^* (X) = \sum_{i=1}^k A_i^\top X A_i^{}
\]
induced by the fixed collection $\bA = (A_1,\dots,A_k)$. In operator scaling, such a linear map $\Phi$ is called a \emph{completely positive (CP) map}, the matrices $A_i$ are called the \emph{Kraus operators}, and $\Phi^*$ is called the \emph{dual} of~$\Phi$.
Note that under Assumption~\ref{assumption 1}, we have
\[
\Phi(\caC_n) \subseteq \caC_m \quad \text{and} \quad \Phi^*(\caC_m) \subseteq \caC_n 
\]
(cf.~Lemma~\ref{lem: assumption 1}). We also introduce the nonlinear operators
\begin{equation}\label{eq: operators S1 and S2}
S_1(Y) \coloneqq \frac{1}{m} \Phi(Y)^{-1} \quad \text{and} \quad S_2(X) \coloneqq \frac{1}{n} \Phi^*(X)^{-1},
\end{equation}
which define maps from $\caC_n$ to $\caC_m$ (resp.~$\caC_m$ to $\caC_n$). The alternating fixed-point iteration realized by Algorithm~\ref{algo: FFPI} can then be written in the way presented in Algorithm~\ref{algo: PD-FPI}.

\begin{algorithm}[t]
\small
\caption{Fixed-Point Iteration on PD Cones}
\label{algo: PD-FPI}
\begin{algorithmic}[1]
%\Require
\Input
Matrix collection $\bA$ satisfying Assumption~\ref{assumption 1} and initial matrix $Y_0 \in \caC_n$.
%\Output
% \State Initialize $Y_0 = I_m$.
\For
{$t = 0,1,2,\dots$}
    \State
    Compute
    \[
    X_{t+1} = S_1(Y_t) = \frac{1}{m} \left(\sum_{i=1}^k A_i Y_t A_i^\top \right)^{-1}.  %\frac{1}{m} \Phi(Y_t)^{-1}.
    \]
    \State
    Compute
    \[
    Y_{t+1} = S_2(X_{t+1}) = \frac{1}{n} \left(\sum_{i=1}^k A_i^\top X_{t+1} A_i \right)^{-1}.
    \]
\EndFor
\end{algorithmic}
\end{algorithm}

We can formulate Algorithm~\ref{algo: PD-FPI} in compact form as a fixed-point iteration
\begin{equation}\label{eq: full fpi}
 \begin{pmatrix} X_{t+1} \\ Y_{t+1} \end{pmatrix} = \bS\!\begin{pmatrix} X_t \\ Y_t \end{pmatrix}
\end{equation}
with the mapping $\bS : \caC_m \times \caC_n \to \caC_m \times \caC_n$ defined as
\begin{equation}\label{eq: fixed-point map}
\bS\!\begin{pmatrix} X \\ Y \end{pmatrix} = \begin{pmatrix} S_1(Y) \\ S_2(S_1(Y)) \end{pmatrix}.
\end{equation}
Any fixed point $(X_*,Y_*) \in \caC_m \times \caC_n$ of this iteration provides solutions $(L_*,R_*)$ to the operator scaling problem~\eqref{eq: OSa}--\eqref{eq: OSb} via decomposition
\[
 X_* = L_*^\top L_*^{}, \qquad Y_* = R_*^\top R_*^{}.
\]

Observe that $(X_*,Y_*)$ is a fixed point of $\bS$ if and only if $(\mu X_*, \mu^{-1} Y_*)$ is a fixed point for any $\mu > 0$. The fixed-point iteration reflects this scaling indeterminacy in the sense that 
\begin{equation}\label{eq: orbital equivariance}
\bS\!\begin{pmatrix} \mu X_t \\ \mu^{-1} Y_t \end{pmatrix} = \begin{pmatrix} \mu X_{t+1} \\ \mu^{-1} Y_{t+1} \end{pmatrix},
\end{equation}
that is, the map $\bS$ is equivariant under the corresponding group action. We will call the trajectory of $(\mu X, \mu^{-1} Y)$ for $\mu > 0$ the \emph{orbit} of a pair $(X,Y)$. Then~\eqref{eq: orbital equivariance} shows that we can in fact interpret~\eqref{eq: full fpi} (and hence Algorithm~\ref{algo: PD-FPI}) as a fixed-point iteration on the quotient space of orbits.

Note that in practice the usually most efficient and stable choice for realizing Algorithm~\ref{algo: PD-FPI} is still by executing Algorithm~\ref{algo: FFPI} using the standard Cholesky decomposition. However, for the theoretical investigation, we will focus on Algorithm~\ref{algo: PD-FPI} from now on.

\subsection{Recap on global convergence}

As has been outlined in previous works~\cite{Georgiou2015,Idel2016}, under some additional assumptions, the fixed-point formulation on PD cones provides an elegant way of proving the global convergence of the Sinkhorn algorithm to a solution of the operator scaling problem via nonlinear Perron-Frobenius theory. We briefly recap this approach here as it will be useful for defining overrelaxation based on geodesics in Section~\ref{sec: overrelaxation}. 

First recall the \emph{Hilbert metric} on a cone of real PD matrices: setting
\begin{align*}
 M(X, \tilde X) &= \inf\{\beta \in \R : X \preceq \beta \tilde X\} = \lambda_{\max}(\tilde X^{-1/2} X \tilde X^{-1/2}), \\
 m(X, \tilde X) &= \sup\{\alpha \in \R :  \alpha \tilde X  \preceq X \} = \lambda_{\min}(\tilde X^{-1/2} X \tilde X^{-1/2})
\end{align*}
for two PD matrices $X, \tilde X$ (where $\preceq$ is the Loewner partial ordering), the Hilbert metric is defined as 
\[
 d_H(X, \tilde X) = \log \left(\frac{M(X, \tilde X)}{m(X, \tilde X)}\right) = \log \kappa(\tilde X^{-1/2} X \tilde X^{-1/2})
\]
where $\kappa$ is the spectral condition number. Precisely speaking, this defines a metric on the space of one-dimensional rays in the positive definite cone since $d_H(X, \tilde X)$ is invariant under scalar multiplication of the arguments. Equivalently, it is a metric on the convex set of PD matrices with trace one (density matrices); see, e.g.,~\cite[Chapter~2]{Lemmens2012}. Remarkably, this set then even becomes a complete metric space when equipped with the Hilbert metric~\cite[Proposition~2.5.4]{Lemmens2012}.

One can now investigate the contraction properties of the maps $S_1$ and $S_2$ defining the half-steps in Algorithm~\ref{algo: PD-FPI} with respect to the Hilbert metric. Since for any positive definite $X$ and $\tilde X$ one has $d_H(X,\tilde X) = d_H(X^{-1}, {\tilde X}^{-1})$, and using the scaling invariance, one observes
\[
 d_H(S_1(Y),S_1(\tilde Y)) = d_H(\Phi(Y),\Phi(\tilde Y)).
\]
The well-known Birkhoff--Hopf theorem~\cite[Theorem~3.5]{Eveson1995} for cone preserving linear maps bounds the contraction ratio
\begin{equation}\label{eq: contraction ratio}
 \sup_{Y, \tilde Y \in \caC_n} \left( \frac{d_H(\Phi(Y),\Phi(\tilde Y))}{d_H(Y,\tilde Y)} \right) = \Lambda_1 \coloneqq \frac{\chi - 1}{\chi + 1} < 1
\end{equation}
with
\[
 \chi^2 = \sup_{Y,\tilde Y \in \caC_n} \left( \frac{M(\Phi(Y),\Phi(\tilde Y))}{m(\Phi(Y),\Phi(\tilde Y))} \right) = \sup_{Y,\tilde Y \in \caC_n} \exp [ d_H(\Phi(Y), \Phi(\tilde Y))],
\]
provided that $\chi$ is finite. To ensure the latter, one can make additional assumptions. In~\cite{Georgiou2015}, the following property has been considered.

\begin{definition}\label{def:positivity improving}
A CP map $\Phi$ is said to be \emph{positivity improving} if it maps nonzero positive semidefinite (PSD) matrices to positive definite ones.
\end{definition}

Note that in this definition it would suffice to require that $\Phi$ maps rank-one positive semidefinite matrices to positive definite matrices. Based on this one can show that $\Phi$ is positivity improving if and only if for any vectors $v, w$ the equations $w^\top A_i v = 0$ for $i=1,\dots,k$ imply $v = 0$ or $w = 0$. Clearly, $k \ge \max(m,n)$ is a necessary condition for this property to hold. This characterization also implies that $\Phi$ is positivity improving if and only if $\Phi^*$ is positivity improving. 

If $\Phi$ is positivity improving, $\chi^2$ is indeed finite, since one then has, noting the scaling invariance of the Hilbert metric,
\[
 \chi^2 \le \sup_{\substack{Y, \tilde Y \succeq 0 \\ \tr(Y) = \tr(\tilde Y) = 1 }} \exp [d_H(\Phi(Y), \Phi(\tilde Y))]
\]
($\tr$ denoting the trace), where the right hand side is the supremum of a continuous function over a compact set and hence finite (in fact, $\chi^2$ equals this supremum). By~\eqref{eq: contraction ratio}, the map $S_1$ then is a contraction in Hilbert metric with Lipschitz constant $\Lambda_1 < 1$. Similarly, $S_2$ is shown to be a contraction in Hilbert metric with a Lipschitz constant $\Lambda_2 < 1$. We note that in the case of matrix scaling, the corresponding Lipschitz constants $\Lambda_1$ and $\Lambda_2$ are equal (which follows from an explicit formula; see,~e.g.,~\cite[Theorem~A.6.2]{Lemmens2012}), but we were not able to clarify whether this is true or not in our more general case of operator scaling.

As a result of these considerations, one can prove the following existence and convergence statement.

\begin{theorem}[{cf.~\cite[Theorem~6]{Georgiou2015} and~\cite[Corollary~9.16]{Idel2016}}]\label{thm: global convergence fpi}
Assume that the CP map $\Phi$ with Kraus operators $\bA = (A_1,\dots,A_k)$ is positivity improving. Then the map $\bS$ in~\eqref{eq: fixed-point map} has a unique orbit $(\mu X_*,\mu^{-1} Y_*)$, $\mu > 0$, of fixed points in $\caC_m \times \caC_n$. For any initialization $Y_0 \in \caC_n$, the iterates $(X_t,Y_t)$ of Algorithm~\ref{algo: PD-FPI} converge to $(X_*,Y_*)$ in the Hilbert metric (i.e.~in the sense of orbits) at a linear rate satisfying
\[
 d_H(X_{t+1},X_*) \le (\Lambda_1 \Lambda_2)^t \Lambda_1 d(Y_0,Y_*) \quad \text{and} \quad d_H(Y_{t+1},Y_*) \le  (\Lambda_1 \Lambda_2)^{t+1} d(Y_0,Y_*)
\]
for all $t \ge 0$, where $0 < \Lambda_1, \Lambda_2 < 1$ are Lipschitz constants of $S_1$ and $S_2$ with respect to the Hilbert metric.
\end{theorem}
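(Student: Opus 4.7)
The plan is to reduce the two-sequence iteration to a Banach contraction on a complete metric space. Observe from~\eqref{eq: fixed-point map} that $\bS(X,Y)$ does not actually depend on $X$, so the dynamics collapse to $Y_{t+1} = T(Y_t)$ with $T := S_2 \circ S_1 : \caC_n \to \caC_n$, augmented by the auxiliary relation $X_{t+1} = S_1(Y_t)$. A direct check using $S_1(\mu Y) = \mu^{-1} S_1(Y)$ and $S_2(\mu^{-1} X) = \mu S_2(X)$ shows $T$ is positively $1$-homogeneous, hence it descends to a well-defined self-map on rays in $\caC_n$. Since $d_H$ is invariant under positive rescaling, it becomes a genuine metric on this quotient, which I would realize as the set of trace-one PD matrices and which is complete in $d_H$ by~\cite[Proposition~2.5.4]{Lemmens2012}. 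Combining the Lipschitz constant $\Lambda_1$ of $S_1$ from~\eqref{eq: contraction ratio} with an analogous constant $\Lambda_2$ of $S_2$ (obtained by applying the same Birkhoff--Hopf argument to $\Phi^*$, which is positivity improving whenever $\Phi$ is), the induced map of $T$ on density matrices is a $(\Lambda_1\Lambda_2)$-contraction of a complete metric space.

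Banach's fixed-point theorem then delivers a unique density matrix $Y_*$ with $T(Y_*) = \lambda Y_*$ for some scalar $\lambda > 0$. To upgrade this to a literal fixed point of $\bS$, I would argue $\lambda = 1$ by a trace/duality identity: setting $X_* = S_1(Y_*) = \tfrac{1}{m}\Phi(Y_*)^{-1}$ gives $\tr(\Phi(Y_*) X_*) = 1$, while $T(Y_*) = \lambda Y_*$ forces $\Phi^*(X_*) = \tfrac{1}{n\lambda}\, Y_*^{-1}$ and hence $\tr(\Phi^*(X_*) Y_*) = 1/\lambda$; the two traces coincide by the standard duality $\tr(\Phi(Y) X) = \tr(\Phi^*(X) Y)$, so $\lambda = 1$. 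Thus $(X_*, Y_*)$ is a genuine fixed point of $\bS$, and the orbit equivariance~\eqref{eq: orbital equivariance} together with uniqueness of the fixed ray shows that every fixed point lies on the orbit $\{(\mu X_*, \mu^{-1} Y_*) : \mu > 0\}$.

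For the linear rates I would simply alternate the two Lipschitz estimates on the original iterates, working throughout in $d_H$: namely $d_H(X_{t+1}, X_*) = d_H(S_1(Y_t), S_1(Y_*)) \le \Lambda_1\, d_H(Y_t, Y_*)$ and $d_H(Y_{t+1}, Y_*) = d_H(S_2(X_{t+1}), S_2(X_*)) \le \Lambda_2\, d_H(X_{t+1}, X_*)$, then iterate to obtain the two displayed bounds $d_H(Y_{t+1}, Y_*) \le (\Lambda_1\Lambda_2)^{t+1} d_H(Y_0, Y_*)$ and $d_H(X_{t+1}, X_*) \le \Lambda_1 (\Lambda_1\Lambda_2)^t d_H(Y_0, Y_*)$.

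The main obstacle I anticipate is conceptual rather than computational: the scaling indeterminacy~\eqref{eq: orbital equivariance} means Banach on the quotient only produces a fixed ray, and one must separately verify that the Perron-type eigenvalue $\lambda$ equals $1$ so that this ray actually carries honest fixed points of $\bS$ rather than trajectories that are merely periodic up to rescaling. The duality identity above resolves this cleanly, but without it the Banach argument alone is insufficient to guarantee existence of a fixed point in $\caC_m \times \caC_n$.
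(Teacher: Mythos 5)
Your proof is correct and follows essentially the same strategy as the paper: pass to the trace-normalized quotient, apply Banach, and then resolve the scaling indeterminacy with a trace identity. The small differences are worth noting. The paper normalizes both components and contracts the paired map $\hat{\bS}$ on $\Sigma_m \times \Sigma_n$ with respect to the max-metric (getting Lipschitz constant $\Lambda_1$), while you contract the composite $T = S_2 \circ S_1$ on $\Sigma_n$ alone (getting the sharper $\Lambda_1\Lambda_2$); this single-variable reduction is slightly cleaner since the $X$-component of $\bS$ carries no information anyway. To upgrade the Perron ray to an honest fixed point, the paper writes the relations $S_1(\hat{Y}_*) = \alpha\hat{X}_*$ and $S_2(\hat{X}_*) = \beta\hat{Y}_*$ as explicit matrix equations, takes traces to conclude $\alpha = \beta$, and then rescales to $(\sqrt{\alpha}\,\hat{X}_*, \sqrt{\alpha}\,\hat{Y}_*)$. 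Your use of the adjoint identity $\tr(\Phi(Y)X) = \tr(\Phi^*(X)Y)$ to show $\lambda = 1$ directly is the same underlying computation, but phrased more transparently and avoiding the final rescaling step. Both derivations of the linear rates are identical alternations of the two Lipschitz bounds. Your own remark at the end correctly identifies the one nontrivial issue—that Banach on the quotient only gives a fixed ray, not a fixed point—and the trace/duality step you supply is exactly the mechanism the paper uses to close that gap.
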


\begin{proof}
We only sketch the proof. Let $\Sigma_m = \{ X \in \caC_m \colon \tr(X) = 1\}$. Consider the map $\hat \bS : \Sigma_m \times \Sigma_n \to \Sigma_m \times \Sigma_n$ given by applying $\bS$ and then normalizing both components to trace one. Using the invariance of the Hilbert metric under scaling, it is not difficult to show that $\hat \bS$ is a contraction with respect to the metric $\max(d_H(X,\tilde X), d(Y,\tilde Y))$ with Lipschitz constant bounded by $\Lambda_1$. Since $\Sigma_m \times \Sigma_n$ is a complete metric space under this metric, the existence of a unique fixed point $(\hat X_*, \hat Y_*)$ of $\hat \bS$ follows from the Banach fixed-point theorem. The fixed point satisfies $S_1(\hat Y_*) = \alpha \hat X_*$ and $S_2(\hat X_*) = \beta \hat Y_*$ for some $\alpha,\beta > 0$. Using the definition of $S_1$ and $S_2$ these relations can be written as
\[
 \sum_{i=1}^k \hat X_*^{1/2} A_i^{} \hat Y_* A_i^\top \hat X_*^{1/2} = \frac{1}{m \alpha} I_m, \qquad
 \sum_{i=1}^k \hat Y_*^{1/2} A_i^\top \hat X_* A_i^\top \hat Y_*^{1/2} = \frac{1}{n \beta} I_n.
\]
Noting that the traces of the left sides in both equations are equal implies $\alpha = \beta$. It is then easy to conclude that $(X_*,Y_*) = (\sqrt{\alpha} \hat X_*, \sqrt{\alpha} \hat Y_*)$ is a fixed point of $\bS$. At the same time, the uniqueness of $(\hat X_*, \hat Y_*)$ implies that $(\mu X_*, \mu^{-1} Y_*)$ with $\mu > 0$ is the only orbit of fixed points of~$\bS$. The convergence of the $(X_t,Y_t)$ to $(X_*,Y_*)$ in the Hilbert metric at the asserted rates then follows directly from $X_{t+1} = S_1(S_2(X_t))$ for $t \ge 1$ and $Y_{t+1} = S_2(S_1(Y_t))$ for $t \ge 0$ using the Lipschitz constants of $S_1$ and $S_2$.
\end{proof}

One should note that $\Phi$ being positivity improving is a convenient but not necessary condition for the solvability of the operator scaling problem~\eqref{eq: OSa}--\eqref{eq: OSb}. In~\cite{Gurvits2004} the existence of solutions is shown for the broader class of so called indecomposable CP maps by proving the convergence of the scaling algorithm in its original form of Algorithm~\ref{algo: OSI}, albeit without a rate. Moreover, solutions are also unique (up to orbits) in this case. More generally, a CP map has a solution to operator scaling if and only if it is the direct sum of indecomposable CP maps~\cite[Theorem~9.4]{Idel2016}.
This analysis is however much more involved. Anyway, in practice, the algorithm works quite reliably without checking assumptions on the CP map, for example also in cases when $k < \max(m,n)$ (not included in the experiment section), which cannot be positivity improving.

\subsection{Interpretation as alternating optimization method}\label{sec: alternating optimization}

It is often useful to interpret operator scaling from a viewpoint of optimization. Consider the function
\begin{equation}\label{eq: cost function}
\begin{aligned}
 f(X,Y) &= \tr (X \Phi(Y) ) -\frac{1}{m} \log \det (X) - \frac{1}{n} \log \det (Y) \\
&= \tr (\Phi^*(X) Y ) -\frac{1}{m} \log \det (X) - \frac{1}{n} \log \det (Y),
\end{aligned}
\end{equation}
which is defined on $\caC_m \times \caC_n \subseteq \R^{m \times m}_{\mathrm{sym}} \times \R^{n \times n}_{\mathrm{sym}}$. Using that the gradient of the log-determinant at positive definite $X$ is given by $X^{-1}$ (see,~e.g.,~\cite[Section~A.4.1]{Boyd2004}) it holds that
\[
 \nabla_X f(X,Y) = \Phi(Y) - \frac{1}{m} X^{-1} \quad \text{and} \quad \nabla_Y f(X,Y) = \Phi^*(X) - \frac{1}{n} Y^{-1}. 
\]
Comparing with~\eqref{eq: operators S1 and S2} we see that the single steps in Algorithm~\ref{algo: PD-FPI} alternatingly set one of the partial gradients of $f$ to zero. On the other hand, the partial Hessians $\nabla_{XX} f(X,Y)$ and $\nabla_{YY} f(X,Y)$, given by the linear maps
\begin{equation}\label{eq: partial Hessians}
 H_1 \mapsto \nabla_{XX} f(X,Y)[H_1] = \frac{1}{m} X^{-1} H_1 X^{-1} \quad \text{and} \quad H_2 \mapsto \nabla_{YY} f(X,Y)[H_2] = \frac{1}{n} Y^{-1} H_2 Y^{-1}
\end{equation}
on $\R^{m \times m}_{\mathrm{sym}}$ and $\R^{n \times n}_{\mathrm{sym}}$, respectively, are positive definite (with respect to Frobenius inner products) linear maps for all $(X,Y) \in \caC_m \times \caC_n$, which shows that the restricted functions $X \mapsto f(X,Y)$ and $Y \mapsto f(X,Y)$ (with one variable being fixed) are actually strictly convex. Therefore, the single steps in the algorithm actually minimize $f$ with respect to one variable, that is
\begin{align*}
 X_{t+1} &= \argmin_{X \in \caC_m} f(X, Y_t), \\
 Y_{t+1} &= \argmin_{Y \in \caC_n} f(X_{t+1}, Y).
\end{align*}
Therefore, Algorithm~\ref{algo: PD-FPI} can be interpreted as an alternating optimization method for the function~$f$. %Note that this also provides alternative definitions for the maps $S_1$ and $S_2$ as putting partial gradients of $f$ to zero with the other variable kept fixed.  

The solutions of the operator scaling problem are the stationary points of $f$. Note that since $f$ is constant on the nonconvex orbits $(\mu X, \mu^{-1} Y)$, it cannot be jointly-convex in  $(X,Y)$ with respect to the Euclidean geometry. However, one can show that $f$ is a \emph{geodesically convex} function~(see appendix~\ref{sec:g-convex-f}), with geodesics defined block-wise as in Section~\ref{sec: overrelaxation Hilbert metric}. This implies that in operator scaling we are indeed looking for a \emph{global} minimum $(X_*,Y_*)$ of the function $f$.

Under the additional assumption that the linear map $\Phi$ is positivity improving we have already seen in Theorem~\ref{thm: global convergence fpi} that a stationary point $(X_*,Y_*)$ is unique up to its one-dimensional orbit. Indeed, in this case we can show that the Hessian of $f$ at $(X_*,Y_*)$ has only a one-dimensional null space tangential to the orbit (Morse--Bott property). This property will be needed in the local convergence analysis in Section~\ref{sec: local convergence}. 

\begin{lemma}\label{lem: null space of Hessian}
Assume that the CP map $\Phi$ is positivity improving, and let $(X_*,Y_*)$ be a fixed-point of the map $\bS$ in~\eqref{eq: fixed-point map}. Then the Hessian $\nabla^2 f(X_*,Y_*)$ is positive semidefinite with a one-dimensional null space spanned by $(X_*,-Y_*)$. In other words, the null space of $\nabla^2 f(X_*,Y_*)$ equals the tangent space to the orbit $(\mu X_*,\mu^{-1} Y_*)$ at $\mu = 1$.
\end{lemma}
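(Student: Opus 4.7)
My plan is to compute $\nabla^2 f(X_*,Y_*)$ as an explicit bilinear form, normalize the two diagonal blocks via an isotropic change of coordinates, and then reduce both positive semidefiniteness and the null-space count to a noncommutative Perron--Frobenius statement for a positivity-improving CP map.

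Starting from~\eqref{eq: partial Hessians} together with $\partial^2_{XY}\tr(X\Phi(Y)) = \Phi$, the Hessian at $(X_*,Y_*)$ acts on an increment $(\xi,\eta)\in \R^{m\times m}_{\mathrm{sym}}\times \R^{n\times n}_{\mathrm{sym}}$ as
\[
 H_*[(\xi,\eta),(\xi,\eta)] = \tfrac{1}{m}\tr\!\bigl((X_*^{-1}\xi)^2\bigr) + \tfrac{1}{n}\tr\!\bigl((Y_*^{-1}\eta)^2\bigr) + 2\tr\!\bigl(\xi\,\Phi(\eta)\bigr).
\]
Substituting $(\xi,\eta) = (X_*,-Y_*)$ and using the fixed-point identities $\Phi(Y_*) = \tfrac{1}{m}X_*^{-1}$ and $\Phi^*(X_*) = \tfrac{1}{n}Y_*^{-1}$ gives $1+1-2 = 0$, confirming that $(X_*,-Y_*)$ is a null direction; by~\eqref{eq: orbital equivariance} it is moreover the tangent to the orbit at $(X_*,Y_*)$.

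Next I would apply the isotropic change of variables $\xi = X_*^{1/2} U X_*^{1/2}$ and $\eta = Y_*^{1/2} V Y_*^{1/2}$ (a linear isomorphism of the symmetric-matrix spaces). A short calculation using the cyclic property of the trace gives
\[
 H_* = \tfrac{1}{m}\|U\|_F^2 + \tfrac{1}{n}\|V\|_F^2 + 2\langle U,\Psi(V)\rangle_F,
\]
where $\Psi(V) \coloneqq X_*^{1/2}\,\Phi(Y_*^{1/2} V Y_*^{1/2})\,X_*^{1/2}$ is the CP map with Kraus operators $X_*^{1/2} A_i Y_*^{1/2}$, and the fixed-point relations translate into the balance conditions $\Psi(I_n) = \tfrac{1}{m}I_m$ and $\Psi^*(I_m) = \tfrac{1}{n}I_n$. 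Viewing $H_*$ as the self-adjoint block operator
\[
 \mathcal H = \begin{pmatrix} \tfrac{1}{m}\,\mathrm{Id} & \Psi \\ \Psi^* & \tfrac{1}{n}\,\mathrm{Id} \end{pmatrix},
\]
a direct Schur complement calculation shows that $\mathcal H$ is positive semidefinite with null space $\mathrm{span}\bigl((-I_m,I_n)\bigr)$ if and only if the self-adjoint positive semidefinite operator $T \coloneqq \Psi^*\Psi$ on $\R^{n\times n}_{\mathrm{sym}}$ has spectral radius $\tfrac{1}{mn}$ with one-dimensional top eigenspace $\mathrm{span}(I_n)$. Pulling the null direction back through the change of variables then recovers $(X_*,-Y_*)$ in the original coordinates.

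To establish this spectral statement, I would first observe from $T(I_n) = \Psi^*(\tfrac{1}{m}I_m) = \tfrac{1}{mn}I_n$ that $\tfrac{1}{mn}$ is already realized as an eigenvalue with positive definite eigenvector $I_n$. Since conjugation by positive definite matrices preserves positivity-improving CP maps, $\Psi$ inherits this property from $\Phi$ and $\Psi^*$ from $\Phi^*$, so their composition $T$ sends every nonzero positive semidefinite matrix to a positive definite one. The noncommutative Perron--Frobenius theorem for positivity-improving CP maps then yields that the spectral radius is a simple eigenvalue with a positive definite eigenvector that is unique up to scaling, forcing $\tfrac{1}{mn}$ to be the spectral radius of $T$ and $\mathrm{span}(I_n)$ to be its entire top eigenspace. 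The main obstacle is precisely this last step: invoking noncommutative Perron--Frobenius in exactly the form needed for self-adjoint CP maps on real symmetric matrices, and carefully verifying that positivity improving is preserved both under conjugation by $X_*^{1/2}$, $Y_*^{1/2}$ and under the composition $\Psi^*\Psi$.
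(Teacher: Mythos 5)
Your proof is correct, and it follows the same broad outline as the paper's (verify the null direction, conjugate away $X_*,Y_*$ to reduce to a normalized CP map, then a Perron--Frobenius argument), but it diverges from the paper in the two main technical steps.

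For positive semidefiniteness, the paper does not use the Hessian formula at all: it appeals to the fact (established via geodesic convexity in the appendix) that $(X_*,Y_*)$ is a global minimizer of $f$, so $\nabla^2 f(X_*,Y_*)\succeq 0$ automatically. You instead derive it directly from the Schur complement of your block operator $\mathcal H$, which requires knowing $r(T)=\tfrac{1}{mn}$ in advance. Your route is arguably more self-contained here, since it does not lean on the separate g-convexity lemma, but it shifts all the weight onto the spectral statement about $T=\Psi^*\Psi$.

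For the one-dimensionality of the kernel, the paper deliberately avoids invoking a noncommutative Perron--Frobenius theorem as a black box. It works with the operator on $\R^{m\times m}_{\mathrm{sym}}$ that is $mn\,\Psi\Psi^*$ in your notation (same nonzero spectrum as your $T$), expands a candidate eigenvector $\tilde H_1$ in its own eigenbasis $\{u_iu_i^\top\}$, and extracts from the eigenvalue equation the scalar necessary condition $z = Bz$ with $B$ a strictly positive stochastic matrix; classical Perron--Frobenius then forces $z\propto\mathbf 1$, hence $\tilde H_1\propto I$. Your appeal to the noncommutative Perron--Frobenius theorem for positivity-improving CP maps (spectral radius is a simple eigenvalue with PD eigenvector, unique up to scaling) is a valid and standard result, and your verification that positivity improving survives conjugation by $X_*^{1/2},Y_*^{1/2}$ and composition is correct; but you should cite a precise statement rather than leave it as ``the main obstacle.'' The paper's spectral-decomposition trick is essentially an elementary in-line proof of the very special case of that theorem that is needed, which is why the authors chose it.

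One small ordering issue: computing $H_*[(\xi,\eta),(\xi,\eta)]=0$ at $(\xi,\eta)=(X_*,-Y_*)$ only certifies a null direction \emph{after} positive semidefiniteness is known. Either check directly that the Hessian as a linear map annihilates $(X_*,-Y_*)$ (using the fixed-point identities, as the paper does), or note, as the paper also does, that $f$ is constant along the orbit $(\mu X_*,\mu^{-1}Y_*)$ and hence its tangent is automatically in the kernel.
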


\begin{proof}
According to the considerations preceding the lemma, $(X_*,Y_*)$ is a global minimum of $f$. Therefore, the Hessian is positive semidefinite. Since $\nabla f$ is constant zero on the orbit $(\mu X_*, \mu^{-1} Y_*)$, it also follows that the tangent space to the orbit at $(X_*,Y_*)$ is in the null space of the Hessian. This can also be verified directly: as a linear map on $\R^{m \times m}_{\mathrm{sym}} \times \R^{n \times n}_{\mathrm{sym}}$ the Hessian at $(X_*,Y_*)$ reads
\[
\begin{pmatrix}
 H_1 \\ H_2
\end{pmatrix}
\mapsto
\begin{pmatrix}
 \Phi(H_2) + \frac{1}{m} X_*^{-1} H_1 X_*^{-1} \\ \Phi^*(H_1) + \frac{1}{n} Y_*^{-1} H_2 Y_*^{-1}
\end{pmatrix}.
\]
Using the fixed-point property $\Phi(Y_*) = \frac{1}{m} X_*^{-1}$ and $\Phi^*(X_*) = \frac{1}{n} Y_*^{-1}$ we see that $(H_1,H_2) = (X_*,-Y_*)$ is indeed in the null space of the Hessian. We will show that this is the only direction in the null space.

As can be derived from the above explicit expression of the Hessian, a necessary condition for $(H_1,H_2)$ to be in the null space is that the matrix $\tilde H_1 = X_*^{-1/2} H_1 X_*^{-1/2}$ satisfies the equation
\[
 \tilde H_1 = mn X_*^{1/2} \Phi \big( Y_* \Phi^*(X_*^{1/2} \tilde H_1 X_*^{1/2}) Y_* \big) X_*^{1/2} \eqqcolon \Gamma( \tilde H_1 ),
\]
that is, $\tilde H_1$ must be an eigenvector of the linear operator $\Gamma$ on $\R^{m \times m}_{\mathrm{sym}}$ with eigenvalue one. We know that $\tilde H_1 = I_m$ (corresponding to $H_1 = X_*$) is such an eigenvector, and the lemma will be proved if we show that (up to scalar multiplication) it is the only one. To do this, let
\begin{equation}\label{eq:expansion H_1}
 \tilde H_1 = \sum_{i=1}^m z_i^{} u_i^{} u_i^\top
\end{equation}
be a spectral decomposition with pairwise orthonormal vectors $u_1,\dots,u_m \in \R^m$ and scalars $z_1,\dots,z_m$, so that
\[
 \Gamma(\tilde H_1) = \sum_{i=1}^m z_i^{} \Gamma(u_i^{} u_i^\top).
\]
If $\tilde H_1 = \Gamma(\tilde H_1)$, then by comparing coefficients for the $u_1^{} u_1^\top,\dots,u_m^{} u_m^\top$ we obtain a relation
\[
 z = B z
\]
where the matrix $B = [b_{ij}]$ has entries
\[
 b_{ij} = \tr(\Gamma(u_j^{} u_j^\top) u_i^{} u_i^\top) = u_i^\top \Gamma(u_j^{} u_j^\top) u_i.
\]
Although this matrix depends on the orthonormal basis $u_1,\dots,u_m$, the vector $z = \mathbf 1$ always satisfies the above relation, that is, $B \mathbf 1 = \mathbf 1$, since $I_m = \sum_{i=1}^m u_i^{} u_i^\top$ for any choice of basis. The main observation now is that since $\Phi^*$ (together with $\Phi$) is positivity improving, $\Gamma(u_j^{} u_j^\top)$ is a positive definite matrix for every~$j$, implying that all entries of $B$ are strictly positive. We can then deduce from the Perron--Frobenius theorem that $\mathbf 1$ is the only eigenvector of $B$ with eigenvalue one. This shows that for any orthonormal basis $u_1,\dots,u_m$, $\tilde H_1 = I_m$ is the only eigenvector of $\Gamma$ of the form~\eqref{eq:expansion H_1} with eigenvalue one, which concludes the proof.
\end{proof}

\section{Overrelaxation and accelerated convergence}\label{sec: overrelaxation}

Our goal is to accelerate the convergence of the fixed-point iteration in Algorithm~\ref{algo: PD-FPI} by means of overrelaxation. We discuss several versions, which however we show are equivalent in the first order near fixed points, and in particular yield the same asymptotic local convergence rates.

\subsection{Overrelaxation in Euclidean metric}\label{sec: overrelaxation Euclidean}

A heuristic approach to overrelaxation would be to replace the fixed-point iteration~\eqref{eq: full fpi} with
\begin{equation}\label{eq: overrelaxation Euclidean}
 \begin{pmatrix} X_{t+1} \\ Y_{t+1} \end{pmatrix} =  \begin{pmatrix} (1-\omega) X_t + \omega S_1(Y_t) \\ (1-\omega) Y_t + \omega S_2(X_{t+1}) \end{pmatrix},
\end{equation}
where $\omega \in \mathbb R$ is the relaxation parameter. This results in Algorithm~\ref{algo: overrelaxation PD}. As we will see in the experiments, this can work well in practice, although it is not quite rigorous from a theoretical point of view: when $\omega > 1$ is used (which is the case of interest in overrelaxation) it cannot be guaranteed that positive definite matrices are generated. However, if $(X_*,Y_*) \in \caC_m \times \caC_n$ is a fixed point of $\bS$, then at least for some neighborhood $\mathcal O \subset \caC_n$ (which depends on $\omega$) of $Y_*$ the corresponding iteration map
\[
 \bS_\omega\!\begin{pmatrix} X \\ Y \end{pmatrix} =  \begin{pmatrix} (1-\omega) X + \omega S_1(Y) \\ (1-\omega) Y + \omega S_2((1-\omega) X + \omega S_1(Y)) \end{pmatrix}
\]
is well defined as a map from $\caC_m \times \mathcal O$ to $\caC_m \times \caC_n$ and allows to write a single step of~\eqref{eq: overrelaxation Euclidean} as
\[
 \begin{pmatrix} X_{t+1} \\ Y_{t+1} \end{pmatrix} = \bS_\omega\!\begin{pmatrix} X_{t} \\ Y_{t} \end{pmatrix},
\]
provided $Y_t \in \mathcal O$. Therefore, $\bS_\omega$ is in fact well defined in some neighborhood of the whole orbit $(\mu X_*, \mu^{-1} Y_*)$. Obviously $\bS_\omega$ and $\bS$ have the same fixed points. Note that~$\bS_\omega$ also shares the orbital equivariance property, that is,
\begin{equation}\label{eq: orbital equivariance relaxation}
 \bS_\omega\!\begin{pmatrix} \mu X_{t} \\ \mu^{-1}Y_{t} \end{pmatrix} = \begin{pmatrix} \mu X_{t+1} \\ \mu^{-1}Y_{t+1} \end{pmatrix},
\end{equation}
and can hence be regarded as a map between orbits.

\begin{algorithm}[t]
\small
\caption{Fixed-Point Iteration on PD cone with Overrelaxation (PD-OR)}\label{algo: overrelaxation PD}
\begin{algorithmic}[1]
%\Require
\Input
Matrix collection $\bA$ satisfying Assumption~\ref{assumption 1},  initial matrices $(X_0, Y_0) \in \caC_m \times \caC_n$, and relaxation parameter $\omega >0$.
%\Output
% \State Initialize PD matrices $X_0 = I_m$, $Y_0 = I_n$,
\For
{$t = 0,1,2,\dots$}
    \State
    Compute
    \[
    S_1(Y_t) = \frac{1}{m} \left(\sum_{i=1}^k A_i Y_t A_i^\top \right)^{-1}.  %\frac{1}{m} \Phi(Y_t)^{-1}.
    \]
    and set
    \[
    X_{t+1} = (1-\omega) X_t + \omega S_1(Y_t).
    \]
    Let $L_{t+1}$ be the Cholesky factor of $X_{t+1}$.
    \State
    Compute
    \[
    S_2(X_{t+1}) = \frac{1}{n} \left(\sum_{i=1}^k A_i^\top X_{t+1} A_i \right)^{-1}. %\frac{1}{n} \Phi^*(X_{t+1})^{-1}.
    \]
    and set
    \[
    Y_{t+1} = (1-\omega) Y_t + \omega S_2(X_{t+1}).
    \]
    Let $R_{t+1}$ be the Cholesky factor of $Y_{t+1}$.
\EndFor
\end{algorithmic}
\end{algorithm}

\subsection{Overrelaxation with coordinate transformation}

A possible approach for mitigating the problem that linear combinations of PD matrices are not necessarily PD, is to apply a suitable coordinate transformation
\[
 \begin{pmatrix} \hat X \\ \hat Y \end{pmatrix} = \begin{pmatrix} \psi_1(X) \\ \psi_2(Y) \end{pmatrix} \eqqcolon \Psi\!\begin{pmatrix} X \\ Y \end{pmatrix}
\]
that realizes a bijection between $\caC_m \times \caC_n$ and a \emph{linear} space. One can then perform the actual overrelaxation in the transformed coordinates, that is,
\[
 \begin{pmatrix} \hat X_{t+1} \\ \hat Y_{t+1} \end{pmatrix} =  \begin{pmatrix} (1-\omega) \hat X_t + \omega \hat S_1(\hat Y_t) \\ (1-\omega) \hat Y_t + \omega \hat S_2(\hat X_{t+1}) \end{pmatrix},
\]
where $\hat S_1 = \psi_1^{} \circ S_1 \circ \psi_2^{-1}$ and similarly for $\hat S_2$. We can write this as
\[
 \begin{pmatrix} \hat X_{t+1} \\ \hat Y_{t+1} \end{pmatrix} = \hat \bS_\omega\!\begin{pmatrix} \hat X_{t} \\ \hat Y_{t} \end{pmatrix}.
\]
In the original coordinates the iteration reads
\begin{equation}\label{eq: overrelaxation variable transform}
 \begin{pmatrix} X_{t+1} \\ Y_{t+1} \end{pmatrix} = \bS_{\omega,\Psi}\!\begin{pmatrix} X_{t} \\  Y_{t} \end{pmatrix}
\end{equation}
with $\bS_{\omega,\Psi} \coloneqq \Psi^{-1} \circ \hat \bS_\omega \circ \Psi$. However, note that ensuring the desired orbital equivariance, that is,
\begin{equation}\label{eq: orbital equivariance psi}
 \bS_{\omega,\Psi}\!\begin{pmatrix} \mu X_{t} \\  \mu^{-1} Y_{t} \end{pmatrix} = \begin{pmatrix} \mu X_{t+1} \\ \mu^{-1} Y_{t+1} \end{pmatrix}
\end{equation}
requires certain compatibility conditions, like
\[
 \psi_1^{-1}[ (1-\omega) \psi_1(\mu X) + \omega \psi_1(\mu \tilde X) ] = \mu \psi_1^{-1}[ (1-\omega) \psi_1(X) + \omega \psi_1(\tilde X) ]
\]
for all $\mu > 0$ and similar for $\psi_2$. While this poses a general restriction, it is indeed satisfied in the following two important examples.

\paragraph{Matrix logarithm.}
The somewhat most natural choice for $\psi_1$ and $\psi_2$ would be the matrix logarithm for PD matrices, that is,
\[
 \begin{pmatrix} \hat X \\ \hat Y \end{pmatrix} = \begin{pmatrix} \log X \\ \log Y \end{pmatrix},
\]
the inverse transformations being the matrix exponential. Log-coordinates are known to be extremely useful in the special case of matrix scaling, where $X$ and $Y$ can be taken diagonal, and corresponding overrelaxation schemes for the Sinkhorn algorithm in log-space have been studied in~\cite{Thibault2021,Lehmann2022}. However, in our more general scenario of operator scaling the matrix logarithm (and exponential) might have a potential disadvantage of being rather expensive to compute accurately in practice. Yet it is worth mentioning that the orbital equivariance property~\eqref{eq: orbital equivariance psi} of the corresponding overrelaxation scheme in log coordinates would be satisfied.

\paragraph{Cholesky decomposition.}
A cheaper and reasonable choice in our context is to take factors $L = \psi_1(X)$ and $R = \psi_2(Y)$ in the factorizations $X = L^\top L$ and $Y = R^\top R$, which have to be computed anyway. When using Cholesky decomposition (of $X^{-1}$ and $Y^{-1}$), the matrices $L$ and $R$ are upper triangular and hence indeed belong to linear spaces of the same dimension as the PD cones. Technically speaking, the Cholesky factors of PD matrices matrix only constitute local diffeomorphisms, because of the additional constraint that diagonal elements should be nonzero, and uniqueness only holds up to signs of columns. These restrictions however do not pose a danger in numerical computations and the resulting overrelaxation scheme is efficient and robust. It is therefore noted as Algorithm~\ref{algo: overrelaxation Cholesky}. Moreover, the local convergence analysis as presented in Section~\ref{sec: local convergence} requires only a local diffeomorphism. The orbital equivariance~\eqref{eq: orbital equivariance psi} is ensured as well.

\begin{algorithm}[t]
\small
\caption{Cholesky Factor Fixed-Point Iteration with Overrelaxation (Cholesky-OR)}\label{algo: overrelaxation Cholesky}
\begin{algorithmic}[1]
%\Require
\Input
Matrix collection $\bA$ satisfying Assumption~\ref{assumption 1},  initial matrices $(L_0, R_0) \in \GL_m(\R) \times \GL_n(\R)$, and relaxation parameter $\omega >0$.
%\Output
% \State Initialize scaling factors $L_0 = I_m$, $R_0 = I_n$.
\For
{$t = 0,1,2,\dots$}
    \State
    Compute a Cholesky decomposition
    \[
    \sum_{i=1}^k A_{i}^{} R_t^\top R_t^{} A_{i}^\top = C_t^{} C_t^\top
    \]
    with $C_t$ lower triangular and set
    \[
    L_{t+1} = (1-\omega) L_t + \frac{\omega}{\sqrt{m}} C_t^{-1}.
    \]
    \State
    Compute a Cholesky decomposition
    \[
    \sum_{i=1}^k A_{i}^{\top} L_{t+1}^\top L_{t+1}^{} A_{i}^{} = D_t^{} D_t^\top,
    \]
    with $D_t$ lower triangular and set
    \[
    R_{t+1} = (1-\omega ) R_t + \frac{\omega}{\sqrt{n}} D_t^{-1}.
    \]
\EndFor
\end{algorithmic}
\end{algorithm}

\subsection{Geodesic overrelaxation}\label{sec: overrelaxation Hilbert metric}

We next derive a geometric version of overrelaxation which takes the hyperbolic geometry of the PD cones induced by the Hilbert metric into account. For two PD matrices $X$ and $\tilde X$ consider the operator
\begin{align}\label{eq:sharp}
 X \#_\omega \tilde X \coloneqq X^{1/2}(X^{-1/2}\tilde X X^{-1/2})^\omega X^{1/2},
\end{align}
with a parameter $\omega \in \R$. Note that for $\omega \in [0,1]$ the curve $\omega \mapsto X \#_\omega \tilde X$ connects the matrices $X$ and~$\tilde X$. Indeed, it is well-known that in the set of density matrices, the corresponding normalized curve $\omega \mapsto (X \#_\omega \tilde X) / \tr(X \#_\omega \tilde X)$ with $\omega \in [0,1]$ provides the \emph{geodesic} with respect to the Hilbert metric; see, e.g., \cite[Proposition~2.6.8]{Lemmens2012}. When identifying PD matrices up to scalar multiplication, the same remains true for the operator $X \#_\omega \tilde X$ itself. This provides us with a way of defining overrelaxation for the PD-FPI iteration (Algorithm~\ref{algo: PD-FPI}) along geodesics as follows:
\begin{equation}\label{eq: overrelaxation Hilbert metric}
 \begin{pmatrix} X_{t+1} \\ Y_{t+1} \end{pmatrix} = \bS_{\#_\omega}\!\begin{pmatrix} X_t \\ Y_t \end{pmatrix} \coloneqq \begin{pmatrix} X_t \#_\omega S_1(Y_t) \\ Y_t \#_\omega S_2(X_t \#_\omega S_1(Y_t)) \end{pmatrix}.
\end{equation}
One easily verifies orbital equivariance
\begin{equation}\label{eq: orbital equivariance geodesic}
 \bS_{\#_\omega}\!\begin{pmatrix} \mu X_t \\ \mu^{-1} Y_t \end{pmatrix} = \begin{pmatrix} \mu X_{t+1} \\ \mu^{-1} Y_{t+1} \end{pmatrix}
\end{equation}
for all $\mu > 0$, which allows to interpret~\eqref{eq: overrelaxation Hilbert metric} as a fixed-point iteration on orbits. We denote the resulting scheme as Algorithm~\ref{algo: overrelaxation geodesic}. As before, we note that in practice $S_1(Y_t)$ and $S_2(X_{t+1})$ are usually best computed via Cholesky decomposition. 

\begin{algorithm}[t]
\small
\caption{Fixed-point Iteration on PD Cones with Geodesic Overrelaxation (Geodesic-OR)}
\label{algo: overrelaxation geodesic}
\begin{algorithmic}[1]
%\Require
\Input
Matrix collection $\bA$ satisfying Assumption~\ref{assumption 1},  initial matrices $(X_0, Y_0) \in \caC_m \times \caC_n$, and relaxation parameter $\omega >0$.
%\Output
\For
{$t = 0,1,2,\dots$}
    \State
    Compute
    \[
    S_1(Y_t) = \frac{1}{m} \left(\sum_{i=1}^k A_i Y_t A_i^\top \right)^{-1}.  %\frac{1}{m} \Phi(Y_t)^{-1}.
    \]
    \State
    Compute
    \[
    X_{t+1} = X_t \#_\omega S_1(Y_t)
    \]
    and let $L_{t+1}$ be the Cholesky factor of $X_{t+1}$.
    \State
    Compute
    \[
    S_2(X_{t+1}) = \frac{1}{n} \left(\sum_{i=1}^k A_i^\top X_{t+1} A_i \right)^{-1}. %\frac{1}{n} \Phi^*(X_{t+1})^{-1}.
    \]
    \State
    Compute
    \[
    Y_{t+1} = Y_t \#_\omega S_2(X_{t+1})
    \]
    and let $R_{t+1}$ be the Cholesky factor of $Y_{t+1}$.
\EndFor
\end{algorithmic}
\end{algorithm}

\subsection{Local convergence}\label{sec: local convergence}

All three of the proposed overrelaxation methods, that is,~\eqref{eq: overrelaxation Euclidean},~\eqref{eq: overrelaxation variable transform} and~\eqref{eq: overrelaxation Hilbert metric}, have the same fixed points $(X_*,Y_*) \in \caC_{m} \times \caC_n$. We will show that around fixed points they are actually all equal to first order and hence have the same asymptotic linear convergence rate, which is determined by the spectral radius of the linearization in the fixed point. Since the heuristic version in~\eqref{eq: overrelaxation Euclidean} is a standard nonlinear SOR method in Euclidean space, its rate can be exactly deduced from well-known results.

Note that derivatives in this section are always understood as Fr\'echet derivatives in spaces of symmetric matrices, which is possible since $\caC_m$ and $\caC_{n}$ are open subsets in the corresponding spaces. Clearly, the derivative of $\bS_\omega$ in~\eqref{eq: overrelaxation Euclidean} in a fixed point $(X_*,Y_*)$ is given as
\[
 \bS_\omega'\!\begin{pmatrix} X_* \\ Y_* \end{pmatrix}\begin{bmatrix} H_1 \\ H_2 \end{bmatrix} = \begin{pmatrix} (1-\omega) H_1 + \omega S_1'(Y_*)[H_2] \\ (1-\omega) H_2 + \omega S_2'(X_*)[(1-\omega) H_1 + \omega S_1'(Y_*)[H_2]] \end{pmatrix}.
\]
We can write this linear operator in ``block matrix'' notation as
\begin{equation}\label{eq: derivative block notation}
 \bS_\omega'\!\begin{pmatrix} X_* \\ Y_* \end{pmatrix} = (1-\omega) \begin{pmatrix} I & 0 \\ 0 & I \end{pmatrix} + \omega  \begin{pmatrix} 0 & S_1'(Y_*) \\ (1-\omega) S_2'(X_*) & \omega S_2'(X_*) S_1'(Y_*) \end{pmatrix}.
\end{equation}

For showing that the linearization of the geodesic overrelaxation~\eqref{eq: overrelaxation Hilbert metric} in a fixed point takes the same form, we will need the derivative of the geodesic connection $(X,\tilde X) \mapsto X \#_\omega \tilde X$ at a point $X = \tilde X$.

\begin{lemma}\label{lem: derivative of geodesic}
Let $X \in \caC_{m}$ and $\omega \in \mathbb R$ be fixed. Then for symmetric $H, \tilde H \in \R^{m \times m}$ with small enough norm it holds that
\[
 (X+H)\#_{\omega} (X+\tilde H) = X + (1-\omega)H + \omega \tilde H + O(\| (H,\tilde H) \|^2),
\]
where $O(\|(H,\tilde H)\|^2)$ denotes terms (depending on $X$ and $\omega$) of at least quadratic order in $H$ and $\tilde H$.
\end{lemma}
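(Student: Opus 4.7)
The plan is to expand the sandwich $A^{1/2}(A^{-1/2} B A^{-1/2})^{\omega} A^{1/2}$ with $A = X+H$ and $B = X + \tilde H$ to first order in $(H,\tilde H)$, exploiting the fact that the inner matrix lies near the identity, so that only the first derivative of $M \mapsto M^\omega$ at $M = I_m$ is needed.

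The key algebraic trick is to rewrite $B = A + (\tilde H - H)$, which gives
\[
A^{-1/2} B A^{-1/2} = I_m + A^{-1/2}(\tilde H - H) A^{-1/2}.
\]
Since $A^{-1/2} = X^{-1/2} + O(\|H\|)$ and $\tilde H - H = O(\|(H,\tilde H)\|)$, I can replace $A^{-1/2}$ by $X^{-1/2}$ in the correction term at the cost of a second-order error. This yields $A^{-1/2} B A^{-1/2} = I_m + E$ with $E \coloneqq X^{-1/2}(\tilde H - H) X^{-1/2} + O(\|(H,\tilde H)\|^2)$.

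Next I will invoke the standard fact that for real $\omega$ the map $M \mapsto M^\omega$ is smooth on the positive definite cone with derivative $\omega \, \mathrm{id}$ at $M = I_m$; this follows from $(I_m + E)^\omega = \exp(\omega \log(I_m + E)) = I_m + \omega E + O(\|E\|^2)$ for $\|E\|$ small. Hence
\[
(A^{-1/2} B A^{-1/2})^\omega = I_m + \omega X^{-1/2}(\tilde H - H) X^{-1/2} + O(\|(H,\tilde H)\|^2).
\]
Sandwiching with $A^{1/2}$ on both sides, I will use $A^{1/2} A^{1/2} = A = X + H$ for the $I_m$-term, and again replace $A^{1/2}$ by $X^{1/2}$ (with a quadratic error) in the first-order correction, so that $A^{1/2} X^{-1/2}(\tilde H - H) X^{-1/2} A^{1/2} = \tilde H - H + O(\|(H,\tilde H)\|^2)$. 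Collecting terms gives $X + H + \omega(\tilde H - H) + O(\|(H,\tilde H)\|^2)$, which is the claim.

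The only mildly delicate point is justifying the replacement $A^{-1/2} \to X^{-1/2}$ (and $A^{1/2} \to X^{1/2}$) in the first-order corrections; this just requires that the Fréchet derivatives of $A \mapsto A^{1/2}$ and $A \mapsto A^{-1/2}$ exist and are bounded near $X$, which is standard since these maps are smooth on $\caC_m$ (the derivatives being the solutions of a Sylvester equation and its transpose, respectively). No explicit formula for these derivatives is actually needed because they are always multiplied by an $O(\|(H,\tilde H)\|)$ factor and thus contribute only to the quadratic remainder.
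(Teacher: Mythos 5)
Your proof is correct, and it takes a genuinely different (and arguably cleaner) route than the paper's. The paper expands $(X+H)^{-1/2}$ and $(X+H)^{1/2}$ to first order via the Fr\'echet derivative $D[H]$ of $A \mapsto A^{-1/2}$, plugs everything into the sandwich, applies the binomial series to the inner power, and then must invoke the nontrivial identity $X D[H] X^{1/2} + X^{1/2} D[H] X = -H$ (obtained by differentiating $X\cdot(X^{-1/2})^2\cdot X = X$) to make the first-order terms collapse to the claimed form. Your key move --- writing $B = A + (\tilde H - H)$ so that $A^{-1/2}BA^{-1/2} = I + A^{-1/2}(\tilde H - H)A^{-1/2}$ --- makes that bookkeeping unnecessary: the perturbation off the identity is already first order, so $A^{\pm 1/2}$ can be replaced by $X^{\pm 1/2}$ inside it at quadratic cost, and the zeroth-order sandwich $A^{1/2}\,I\,A^{1/2} = A = X+H$ is exact rather than needing to be expanded. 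You thus avoid any explicit formula for the derivative of $A\mapsto A^{\pm 1/2}$, using only smoothness (boundedness of the derivative near $X$) and the first-order behaviour $(I+E)^\omega = I + \omega E + O(\|E\|^2)$, which you justify correctly via $\exp$ and $\log$. The paper's approach is more explicit about which derivative identities of matrix functions are at play, whereas yours is shorter and more robust to the choice of matrix power function; both are valid proofs of the lemma.
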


This property appears quite intuitive from the perspective of differential geometry and might be known. We provide a direct proof using matrix analysis in Section~\ref{sec: proof lemma}.

\begin{lemma}\label{lem: equal derivatives}
Let $(X_*,Y_*)$ be a fixed-point of the map $\bS$ in~\eqref{eq: fixed-point map} and $\omega \in \mathbb R$. Then $(X_*,Y_*)$ is also a fixed point of the iterations~\eqref{eq: overrelaxation Euclidean},~\eqref{eq: overrelaxation variable transform} and~\eqref{eq: overrelaxation Hilbert metric}, with the corresponding maps $\bS_\omega$, $\bS_{\omega,\Psi}$ and $\bS_{\#_\omega}$ being well-defined and smooth in some neighborhood of $(X_*,Y_*)$ within $\caC_m \times \caC_n$. Furthermore, it holds that
\[
 \bS_\omega'\!\begin{pmatrix} X_* \\ Y_* \end{pmatrix} = \bS_{\omega,\Psi}'\!\begin{pmatrix} X_* \\ Y_* \end{pmatrix} = \bS_{\#_\omega}'\!\begin{pmatrix} X_* \\ Y_* \end{pmatrix}. 
\]
\end{lemma}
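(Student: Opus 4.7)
The plan is to first confirm that $(X_*,Y_*)$ is a fixed point of all three maps and that each map is well-defined and smooth in a neighborhood of $(X_*,Y_*)$, and then carry out the three derivative computations and show they agree. Being a fixed point is immediate in each case: since $S_1(Y_*) = X_*$ and $S_2(X_*) = Y_*$, the Euclidean convex combination $(1-\omega)X_* + \omega S_1(Y_*) = X_*$, the transformed combination $(1-\omega)\psi_1(X_*) + \omega \psi_1(X_*) = \psi_1(X_*)$, and the geodesic $X_*\#_\omega X_* = X_*$ all reduce to $X_*$ (and similarly for the $Y$-component). Smoothness follows because $S_1, S_2$ are smooth on the PD cones (the inversion of a sum of quadratic forms in a PD matrix is smooth), $\Psi$ is a (local) diffeomorphism onto a linear space, and $\#_\omega$ is smooth on $\caC_m \times \caC_m$; at $(X_*,Y_*)$ all three iterates land back in the open sets $\caC_m \times \caC_n$, so by continuity the maps remain there in a neighborhood.

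For the Euclidean version, the explicit linearization was recorded in~\eqref{eq: derivative block notation}. For the coordinate-transformed version $\bS_{\omega,\Psi} = \Psi^{-1}\circ\hat\bS_\omega\circ\Psi$, the chain rule gives
\[
\bS_{\omega,\Psi}'\!\begin{pmatrix}X_*\\Y_*\end{pmatrix} = P^{-1}\,\hat\bS_\omega'(\hat X_*,\hat Y_*)\,P,
\]
where $P = \diag(\psi_1'(X_*),\psi_2'(Y_*))$ is the block-diagonal Jacobian of $\Psi$. Since $\hat S_1 = \psi_1\circ S_1\circ\psi_2^{-1}$ (and analogously for $\hat S_2$), the chain rule yields $\hat S_1'(\hat Y_*) = \psi_1'(X_*)\,S_1'(Y_*)\,\psi_2'(Y_*)^{-1}$ and $\hat S_2'(\hat X_*) = \psi_2'(Y_*)\,S_2'(X_*)\,\psi_1'(X_*)^{-1}$. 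When these are sandwiched between $P^{-1}$ and $P$, the $\psi_i'$ factors telescope away, and $\hat\bS_\omega'$ (which has the same block form as~\eqref{eq: derivative block notation} with $\hat S_i'$ replacing $S_i'$) is carried exactly to $\bS_\omega'(X_*,Y_*)$. The block-diagonal structure of $P$ together with the componentwise nature of $\Psi$ is what makes this cancellation work.

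For the geodesic version, the key ingredient is Lemma~\ref{lem: derivative of geodesic}, which asserts that at a point where both arguments of $\#_\omega$ coincide, the map behaves to first order exactly like the convex combination $(1-\omega)(\cdot) + \omega(\cdot)$. Applying this to $(X,Y)\mapsto X \#_\omega S_1(Y)$ at $(X_*,Y_*)$ (where $S_1(Y_*) = X_*$) gives the first row of the linearization: partial derivative $(1-\omega)I$ in $X$ and $\omega S_1'(Y_*)$ in $Y$, matching the first row of~\eqref{eq: derivative block notation}. For the second component $Y \#_\omega S_2(X_{t+1}(X,Y))$, I would compute via the chain rule using $X_{t+1}(X,Y) = X \#_\omega S_1(Y)$; evaluated at $(X_*,Y_*)$, Lemma~\ref{lem: derivative of geodesic} gives $\partial Y_{t+1}/\partial Y = (1-\omega)I + \omega^2 S_2'(X_*)S_1'(Y_*)$ and $\partial Y_{t+1}/\partial X = \omega(1-\omega)S_2'(X_*)$, which exactly reproduces the second row of~\eqref{eq: derivative block notation}.

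The main obstacle is Lemma~\ref{lem: derivative of geodesic} itself — the identity that a geodesic of the Hilbert geometry linearizes to a Euclidean convex combination at a diagonal point. Once this lemma is in hand, the three computations above are routine applications of the chain rule, and it is really this first-order coincidence between the hyperbolic and flat geometries on the PD cone that underlies the entire equivalence.
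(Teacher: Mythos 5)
Your proposal is correct and follows essentially the same route as the paper's proof: verify the fixed-point and smoothness claims by construction, obtain the Euclidean linearization from~\eqref{eq: derivative block notation}, conjugate by the block-diagonal Jacobian of $\Psi$ for the coordinate-transformed version, and invoke Lemma~\ref{lem: derivative of geodesic} together with the chain rule for the geodesic version. You simply spell out the chain-rule computations (the telescoping of the $\psi_i'$ factors and the second-row derivative of $\bS_{\#_\omega}$) more explicitly than the paper, which leaves those as ``one can verify'' and ``follows directly.''
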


\begin{proof}
The formulas for the operators $S_1$ and $S_2$ are explicitly given in~\eqref{eq: operators S1 and S2} and show that all maps are (locally) well-defined compositions of smooth maps. The fixed-point properties are also clear by construction. An application of the chain rule to~\eqref{eq: overrelaxation variable transform} together with the inverse function theorem yields
\[
 \bS_{\omega,\Psi}'\!\begin{pmatrix} X_* \\ Y_* \end{pmatrix} = (\Psi^{-1})'\!\begin{pmatrix} \hat X_* \\ \hat Y_* \end{pmatrix} \cdot \hat \bS_\omega'\!\begin{pmatrix} \hat X_* \\ \hat Y_* \end{pmatrix} \cdot \Psi'\!\begin{pmatrix} X_* \\ Y*\end{pmatrix} = [\Psi'\!\begin{pmatrix} X_* \\ Y_* \end{pmatrix}]^{-1} \cdot \hat \bS_\omega'\!\begin{pmatrix} \hat X_* \\ \hat Y_* \end{pmatrix} \cdot \Psi'\!\begin{pmatrix} X_* \\ Y* \end{pmatrix}.
\]
However, from the structure of $\hat \bS_\omega'(\hat X_*, \hat Y_*)$ as in~\eqref{eq: derivative block notation}, but with $S_1$ and $S_2$ replaced by $\hat S_1 = \psi^{}_1 \circ S_1 \circ \psi_2^{-1}$ and $\hat S_2 = \psi_2^{} \circ S_2 \circ \psi_1^{-1}$, respectively, one can also verify that
\[
 \hat \bS_\omega'\!\begin{pmatrix} \hat X_* \\ \hat Y_* \end{pmatrix}
    = \Psi'\!\begin{pmatrix} X_* \\ Y_* \end{pmatrix} \cdot \bS_\omega'\!\begin{pmatrix} \hat X_* \\ \hat Y_* \end{pmatrix} \cdot [\Psi'\!\begin{pmatrix} X_* \\ Y_* \end{pmatrix}]^{-1}
\]
(here one exploits the structure $\Psi(X,Y) = (\psi_1(X), \psi_2(Y))$, so that $\Psi'(X_*,Y_*)$ is block diagonal). This shows $\bS_{\omega,\Psi}'(X_*,Y_*) = \bS_\omega'(X_*,Y_*)$. The equality $\bS_{\#_\omega}'(X_*,Y_*) = \bS_\omega'(X_*,Y_*)$ follows directly from applying Lemma~\ref{lem: derivative of geodesic} and the chain rule to~\eqref{eq: overrelaxation Hilbert metric} and comparing with~\eqref{eq: derivative block notation}.
\end{proof}

We can now state the local convergence result for all three versions of relaxation when $0 < \omega < 2$ is used, and under the additional assumption that $\Phi$ is positivity improving. In principle, we follow a standard reasoning based on the fact that the linearization of a nonlinear SOR iteration at a fixed point yields a linear SOR iteration for the Hessian~\cite{Ortega1970}. This is also reflected by the familiar formulas for the convergence rate in the theorem below. However, given the orbital equivariance of all three methods (as stated in~\eqref{eq: orbital equivariance relaxation},~\eqref{eq: orbital equivariance psi} and~\eqref{eq: orbital equivariance geodesic}), local convergence can only be established in the sense of orbits $(\mu X, \mu^{-1} Y)$. A possible way to do this is by passing to the quotient manifold of orbits, which would require some additional concepts. In order to bypass the related technicalities, here we will instead take a more explicit (but equivalent) viewpoint by considering the unique~\emph{balanced} representatives $(\bar X,\bar Y)$ in every orbit satisfying $\| \bar X \|_F  = \| \bar Y \|_F$ (in Frobenius norm). Note that for an arbitrary $(X,Y) \in \caC_m \times \caC_n$, the balanced representative in its orbit is
\begin{equation}\label{eq: balancing map}
 \begin{pmatrix} \bar X \\ \bar Y \end{pmatrix} = \bP\!\begin{pmatrix} X \\ Y \end{pmatrix} \coloneqq \begin{pmatrix} \frac{\| Y \|_F^{1/2}}{\| X \|_F^{1/2}} X \\ \frac{\| X \|_F^{1/2}}{\| Y \|_F^{1/2}} Y \end{pmatrix}.
\end{equation}
By proving local convergence of balanced representatives, we essentially prove local convergence of orbits.

\begin{theorem}\label{thm: local convergence}
Let $\max(m,n) > 1$. Assume that the CP map $\Phi$ is positivity improving, and let $(X_*,Y_*)$ be a fixed-point of the map $\bS$ in~\eqref{eq: fixed-point map}. For $0 < \omega < 2$, consider a sequence $(X_t,Y_t)$ generated by one of the iterations~\eqref{eq: overrelaxation Euclidean},~\eqref{eq: overrelaxation variable transform} or~\eqref{eq: overrelaxation Hilbert metric}, where for~\eqref{eq: overrelaxation variable transform} we only require that $\Psi$ is a \emph{local} diffeomorphism on some open subset containing the orbit of $(X_*, Y_*)$, and that additionally $\bS_{\omega,\Psi}$ satisfies the equivariance property~\eqref{eq: orbital equivariance psi} on that subset (both is satisfied for Algorithm~\ref{algo: overrelaxation Cholesky}).

Then if the balanced starting point $(\bar X_0,\bar Y_0)$ is close enough to the balanced fixed point $(\bar X_*,\bar Y_*)$ (how close may depend on $\omega$ and the chosen method), the sequence $(\bar X_t,\bar Y_t)$ of balanced iterates is well-defined and converges to the balanced fixed point $(\bar X_*,\bar Y_*)$ at an asymptotic rate
\[
 \limsup_{t \to \infty}  \| (\bar X_t,\bar Y_t) - (\bar X_*,\bar Y_*) \|^{1/t} \le \rho(\omega) < 1
\]
(in any norm or equivalent distance measure). Here $\rho(\omega)$ can be expressed by the formula
\[
 \rho(\omega) = \begin{cases} 1 - \omega  + \frac{1}{2} \omega^2 \beta^2 + \omega \beta \sqrt{1 - \omega + \frac14 \omega^2 \beta^2} &\quad \text{if $0 < \omega \le \omega_{\mathrm{opt}}$}, \\ \omega - 1 &\quad \text{if $\omega_{\mathrm{opt}} \le \omega < 2$}, \end{cases}
\]
where $0 \le \beta < 1$ and $\beta^2 = \rho(1)$ is the rate of the standard operator Sinkhorn iteration. The choice
\begin{equation}\label{eq:omega-opt}
 \omega_{\mathrm{opt}} = \frac{2}{1 + \sqrt{1- \beta^2}} > 1.
\end{equation}
achieves the fastest asymptotic rate
\[
 \rho(\omega_{\mathrm{opt}}) = \omega_{\mathrm{opt}} - 1 = \frac{1 - \sqrt{1-\beta^2}}{1 + \sqrt{1 - \beta^2}}.
\]
\end{theorem}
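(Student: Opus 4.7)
The plan is to reduce everything to a linear SOR analysis of the block Hessian $H = \nabla^2 f(X_*,Y_*)$ and then handle the one-dimensional orbit direction that spans its null space via the balancing map $\bP$. First, by Lemma~\ref{lem: equal derivatives} the three overrelaxation maps have identical Fr\'echet derivatives at $(X_*,Y_*)$, so it suffices to work with the Euclidean version $\bS_\omega$ and the explicit block form in~\eqref{eq: derivative block notation}. Differentiating $S_1(Y) = \tfrac{1}{m}\Phi(Y)^{-1}$ and using the fixed-point identity $\Phi(Y_*) = \tfrac{1}{m}X_*^{-1}$ gives $S_1'(Y_*) = -H_{XX}^{-1}H_{XY}$, where $H_{XX}$ is the partial Hessian from~\eqref{eq: partial Hessians} and $H_{XY} = \Phi$ is the off-diagonal block of $H$; symmetrically $S_2'(X_*) = -H_{YY}^{-1}H_{YX}$. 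Substituting into~\eqref{eq: derivative block notation} then exhibits $\bS_\omega'(X_*,Y_*)$ as exactly the standard linear SOR iteration matrix $(D - \omega L)^{-1}\bigl((1-\omega)D + \omega U\bigr)$ for the block splitting $H = D - L - U$ with $D = \diag(H_{XX}, H_{YY})$.

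The next step is to apply Young's SOR theorem to this block system. By Lemma~\ref{lem: null space of Hessian}, $H$ is positive semidefinite with a one-dimensional null space spanned by the orbit direction $(X_*,-Y_*)$, while $H_{XX}$ and $H_{YY}$ remain positive definite. I invoke the form of Young's theorem that allows a PSD $2 \times 2$ block system (as made explicit in~\cite{Oseledets2023}): the nonzero eigenvalues $\lambda$ of $\bS_\omega'(X_*,Y_*)$ correspond to eigenvalues $\mu^2$ of the block Gauss-Seidel matrix $H_{YY}^{-1}H_{YX}H_{XX}^{-1}H_{XY}$ through the Young relation $(\lambda + \omega - 1)^2 = \omega^2 \mu^2 \lambda$. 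The eigenvalue $\mu^2 = 1$ traces back (via the fixed-point identities $H_{XY}Y_* = H_{XX}X_*$ and $H_{YX}X_* = H_{YY}Y_*$, which follow from Lemma~\ref{lem: null space of Hessian}) to precisely the orbit direction and produces the harmless eigenvalue $\lambda = 1$; all remaining eigenvalues satisfy $\mu^2 \le \beta^2 < 1$ thanks to the one-dimensionality of the null space, so $\beta^2$ is the local rate of the unrelaxed Sinkhorn iteration on the quotient. Minimizing Young's envelope over $\omega \in (0,2)$ is textbook and yields the stated formula for $\rho(\omega)$ together with $\omega_{\mathrm{opt}}$ as in~\eqref{eq:omega-opt}.

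To convert this spectral statement into genuine local convergence, I pass to the quotient by orbits using the balancing map $\bP$ from~\eqref{eq: balancing map}. By the orbital equivariance properties~\eqref{eq: orbital equivariance relaxation},~\eqref{eq: orbital equivariance psi} and~\eqref{eq: orbital equivariance geodesic}, the composed iteration $\bar \bS_\omega \coloneqq \bP \circ \bS_\omega$ is well-defined near $(\bar X_*, \bar Y_*)$ and preserves the codimension-one balanced submanifold $\mathcal{M}_b = \{(X,Y) \in \caC_m \times \caC_n : \norm{X}_F = \norm{Y}_F\}$. At the balanced fixed point, $\bP'$ acts as the projection onto $T_{(\bar X_*,\bar Y_*)}\mathcal{M}_b$ along the orbit direction, while $\bS_\omega'$ maps the orbit direction to itself; in the resulting block-triangular decomposition the spectrum of $(\bar \bS_\omega)'$ restricted to $T\mathcal{M}_b$ coincides with that of $\bS_\omega'$ with the trivial eigenvalue $1$ removed, hence has spectral radius exactly $\rho(\omega) < 1$. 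Ostrowski's theorem on local attractors of fixed-point iterations in Euclidean space then delivers both local well-definedness and convergence of the balanced iterates at the asserted asymptotic rate, and since distances between iterates and the fixed point in any reasonable orbit-invariant measure are controlled by distances between balanced representatives, the statement of the theorem follows.

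The principal obstacle is the second step: certifying that the eigenvalue~$1$ produced by Young's correspondence is accounted for entirely by the orbit direction, so that $\beta < 1$ strictly on the relevant complement. This hinges on combining Lemma~\ref{lem: null space of Hessian} (which pins down the null space of $H$ as exactly the orbit direction) with the explicit Young relation between Gauss-Seidel and SOR spectra. A secondary but essentially bookkeeping point is verifying that $\bP$ is a smooth retraction onto $\mathcal{M}_b$ near a balanced fixed point and that its derivative has the claimed projection form; the hypothesis $\max(m,n) > 1$ enters here to guarantee that $\mathcal{M}_b$ is a genuine codimension-one submanifold and that the balanced quotient is nontrivial.
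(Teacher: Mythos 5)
Your proposal is correct and takes essentially the same route as the paper: reduce to the Euclidean version via Lemma~\ref{lem: equal derivatives}, identify the linearization $\bS_\omega'(X_*,Y_*)$ as the block SOR error-iteration matrix for the Hessian $\nabla^2 f(X_*,Y_*)$, invoke the PSD version of Young's theorem from~\cite{Oseledets2023} together with Lemma~\ref{lem: null space of Hessian} to pin the eigenvalue~$1$ to the one-dimensional orbit direction, and then pass to the balanced quotient via $\bP$ to remove that trivial eigenvalue and conclude local convergence at rate $\rho(\omega)$. The paper's final step is organized as an explicit estimate of $\lim_t \|(\bP'\bS_\omega')^t\|^{1/t}$ using the equivariance identity $(\bP'\bS_\omega')^t = \bP'(\bS_\omega')^t$, whereas you frame it through the balanced submanifold $\mathcal{M}_b$ and Ostrowski's theorem; these are equivalent since $\bP'\bS_\omega'$ restricted to $T\mathcal{M}_b$ is similar to $\bS_\omega'$ restricted to the complementary invariant subspace $W_\omega$. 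One minor slip: the hypothesis $\max(m,n)>1$ is not needed to make $\mathcal{M}_b$ a codimension-one submanifold (it always is for $m,n \geq 1$); rather, the paper states it is a technical requirement of~\cite[Lemma~1]{Oseledets2023}.
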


\begin{proof}
Let us first consider iterates $(X_t,Y_t)$ generated by the Euclidean version~\eqref{eq: overrelaxation Euclidean}, that is, using the map $\bS_\omega$. It is well-known, and has been worked out in detail in related work~\cite{Lehmann2022,Oseledets2023}, how the linearization of a fixed-point iteration of the form~\eqref{eq: overrelaxation Euclidean} yields a linear SOR iteration, and how to establish its local contractivity in the presence of group equivariances of Morse--Bott type. Here we adopt essentially the same reasoning for the problem at hand in order to provide a self-contained result. Given that the operators $S_1$ and $S_2$ provide the (unique) global minima for the function $f(X,Y)$ in~\eqref{eq: cost function} when one variable is fixed, we can interpret the fixed-point iteration $\bS_\omega$ as a nonlinear SOR iteration for solving the nonlinear system
\begin{align*}
 \nabla_X f(X,Y) &= 0, \\ \nabla_Y f(X,Y) &=0.
\end{align*}
Differentiating the relations $\nabla_X f(S_1(Y),Y) = 0$ and $\nabla_Y f(X,S_2(X)) = 0$ at a fixed point $(X_*,Y_*)$ and combining with~\eqref{eq: derivative block notation} allows to rewrite $\bS_\omega'(X_*,Y_*)$ as the error iteration operator
\begin{equation}\label{eq: SOR error operator}
 \bS_\omega'\!\begin{pmatrix} X_* \\ Y_* \end{pmatrix} = T_\omega \coloneqq I - N_\omega^{-1} \nabla^2 f(X_*,Y_*)
\end{equation}
of a \emph{linear} SOR iteration for the Hessian
\[
 \nabla^2 f(X_*,Y_*) = \begin{pmatrix} \nabla_{XX} f(X_*,Y_*) & \nabla_{XY} f(X_*,Y_*) \\ \nabla_{YX} f(X_*,Y_*) & \nabla_{YY} f(X_*,Y_*)   \end{pmatrix}
\]
of $f$ at $(X_*,Y_*)$, where
\[
 N_\omega = \begin{pmatrix} \frac{1}{\omega} \nabla_{XX} f(X_*,Y_*) & 0 \\ \nabla_{YX} f(X_*,Y_*) & \frac{1}{\omega} \nabla_{YY} f(X_*,Y_*) \end{pmatrix}.
\]
A more detailed derivation of the formula~\eqref{eq: SOR error operator} for $\bS_\omega'(X_*,Y_*)$ can be found in~\cite[Eqs.~(18)--(20)]{Oseledets2023}, which deals with a different function, but of the same general structure. Note that for obtaining~\eqref{eq: SOR error operator} it is required that the block diagonal of $\nabla^2 f(X_*,Y_*)$ is invertible (so that $N_\omega$ is invertible). This, however, is indeed the case since by~\eqref{eq: partial Hessians} the block diagonal is even positive definite.

We now inspect the contractivity of $T_\omega$. By Lemma~\ref{lem: null space of Hessian}, the Hessian $\nabla^2 f(X_*,Y_*)$ is positive semidefinite and its null space $V_\omega \coloneqq \ker \nabla^2 f(X_*,Y_*)$ equals the tangent space to the orbit $(\mu X_*, \mu^{-1} Y_*)$ at $\mu = 1$. Note that $V_\omega$ is an invariant subspace of $T_\omega$, on which it just acts as identity. We now invoke a version of Young's SOR theorem for positive semidefinite $2 \times 2$ block systems as presented in~\cite[Lemma~1]{Oseledets2023} (the assumption $\max(m,n) > 1$ is technically needed to apply it). It states that $T_\omega$ has another invariant subspace $W_\omega$ of co-dimension one and complementary to $V_\omega$, such that the spectral radius of the restriction of $T_\omega$ onto $W_\omega$ is strictly smaller than one and equals $\rho(\omega)$ as given by the formula in the theorem. In particular, $T_\omega$ is a contraction on $W_\omega$, but not on $V_\omega$.
    
From now on we will focus on the balanced fixed point $(\bar X_*, \bar Y_*)$ and denote by $V_\omega$ and $W_\omega$ the corresponding invariant subspaces of $\bS_\omega'(\bar X_*, \bar Y_*)$. The idea is that for the balanced sequence $(\bar X_t, \bar Y_t)$ the balancing map $\bP$ in~\eqref{eq: balancing map} removes the first order parts of the error in the tangent space $V_\omega$ because $\bP$ is constant on the orbit. Therefore, for the balanced sequence the contractivity of $\bS_\omega'(\bar X_*,\bar Y_*)$ on $W_\omega$ should be sufficient for establishing its local convergence to $(\bar X_*, \bar Y_*)$. From a related perspective of Riemannian optimization, the balanced iterates can be interpreted as representatives of the quotient manifold of orbits. The space $W_\omega$ is a \emph{horizontal space} for the quotient structure, which also indicates that only the contractivity of $\bS_\omega'(X_*,Y_*)$ on $W_\omega$ matters for the local convergence ``in the sense of orbits''. However, in the rest of the proof we will work out this idea without further reference to Riemannian optimization, but using only the balancing map $\bP$ instead.
    
By the orbital equivariance of $\bS_\omega$, the balanced iterates $(\bar X_t, \bar Y_t)$ follow the fixed-point iteration
\[
 \begin{pmatrix} \bar X_{t+1} \\ \bar Y_{t+1} \end{pmatrix} = (\bP \circ \bS_\omega)\!\begin{pmatrix} \bar X_t \\ \bar Y_t \end{pmatrix}.
\]
Since $(\bar X_*,\bar Y_*)$ is a fixed point of both $\bP$ and $\bS_\omega$, we have
\[
 (\bP \circ \bS_\omega)'\!\begin{pmatrix} \bar X_* \\ \bar Y_* \end{pmatrix} = \bP'\!\begin{pmatrix} \bar X_* \\ \bar Y_* \end{pmatrix} \cdot \bS_\omega'\!\begin{pmatrix} \bar X_* \\ \bar Y_*\end{pmatrix}.
\]
We claim that this derivative has the spectral radius equal to $\rho(\omega)$ which will imply the asserted local linear convergence at the asymptotic rate $\rho(\omega)$ by the local contraction principle. Let $\bP' = \bP'(\bar X_*,\bar Y_*)$ and $\bS_\omega' = \bS_\omega'(\bar X_*,\bar Y_*)$ for abbreviation. We will show that
\begin{equation}\label{eq: spectral radius PS}
 \lim_{t \to \infty} \| (\bP' \bS_\omega')^t \|^{1/t} = \rho(\omega).
\end{equation}
Note that by the equivariance property of $\bS_\omega$, it holds $(\bP \circ \bS_\omega) \circ \dots \circ (\bP \circ \bS_\omega) = \bP \circ (\bS_\omega \circ \dots \circ \bS_\omega)$ (where dots indicate that the composition is taken $t$ times). Hence
\[
 (\bP' \bS_\omega')^t = \bP' (\bS_\omega')^t.
\]
Let $\varepsilon > 0$. Since the restriction of $\bS_\omega'$ onto its invariant subspace $W_\omega$ has spectral radius $\rho(\omega)$ as mentioned above, it holds that the restriction of $(\bS_\omega')^t$ onto that subspace satisfies
\[
 (\rho(\omega) - \varepsilon)^t \le \| (\bS_\omega')^t\rvert_{W_\omega} \| \le (\rho(\omega) + \varepsilon)^t
\]
in any given operator norm for $t$ large enough. On the other hand, since $W_\omega$ is complementary to the null space $V_\omega$ of $\bP'$, there exist constants $0 < c \le C$ such that
\[
 c\| x \| \le \| \bP' x \| \le C \| x \|
\]
for all $x \in W_\omega$. We now consider an arbitrary $x \in \R^{m\times m}_{\mathrm{sym}} \times \R^{n\times n}_{\mathrm{sym}}$ and decompose into $x = x_1 + x_2$ with $x_1 \in W_\omega$ and $x_2 \in V_\omega$. Since the subspaces are complementary, there exists a uniform constant $D > 0$ such that $\| x_1 \| \le D \| x \|$. Taking everything into account, we deduce that $\| (\bP' \bS_\omega')^t x \| = \| (\bP' \bS_\omega')^t x_1 \|$ and hence
\[
 c \| (\bS_\omega')^t x_1 \| \le \| (\bP' \bS_\omega')^t x \| \le C \| (\bS_\omega')^t x_1 \| \le CD (\rho(\omega) + \varepsilon)^t \| x \|. 
\]
for $t$ large enough (independent of $x$). The upper bound in this estimate shows that $\| (\bP' \bS_\omega')^t \|^{1/t} \le (CD)^{1/t}(\rho(\omega) + \varepsilon)$. On the other hand, choosing $x_2 = 0$ and $x_1 \in W_\omega$ such that $\| (\bS_\omega')^t x_1 \| = \| (\bS_\omega')^t \| \| x \| \ge (\rho(\omega) - \varepsilon)^t \| x \|$, the lower bound implies that $\| (\bP' \bS_\omega')^t \|^{1/t} \ge c^{1/t} (\rho(\omega) - \varepsilon)$. Since $\varepsilon$ can be taken arbitrarily small,~\eqref{eq: spectral radius PS} follows.

Note that the proof only relied on the equivariance property of the fixed-point iteration and the form of the derivative $\bS_\omega'(X_*,Y_*)$. In light of Lemma~\ref{lem: equal derivatives}, we can hence obtain the same result for the iterations~\eqref{eq: overrelaxation variable transform} and~\eqref{eq: overrelaxation Hilbert metric}.
\end{proof}

\subsection{Global convergence}\label{sec: global convergence}

Since the original fixed-point iteration~\eqref{eq: full fpi} is nonlinear, it is not clear that global convergence can be preserved when relaxation is applied. For the version~\eqref{eq: overrelaxation Hilbert metric} using geodesic overrelaxation (Algorithm~\ref{algo: overrelaxation geodesic}) we are able to prove it for a range $\omega \in (0,\frac{2}{1 + \sqrt{\Lambda_1 \Lambda_2}})$. The result is obtained in a similar way as~\cite[Theorem~3]{Lehmann2022}, but first requires some additional properties of geodesics $X \#_\omega \tilde X$ on PD cones when $\omega$ is larger than one, which might be of independent interest. In particular, the following result is usually only stated for $\omega \in [0,1]$; see \cite[Corollary 1.1]{Gunawardena2003} for the Hilbert metric, \cite[Corollary~6.1.11]{Bhatia2007} for the related Riemannian metric, and \cite[Proposition~1.1.5]{Bacak2014} for general Busemann spaces. 

\begin{lemma}\label{lem:Hilbert-omega}
For $\omega \geq 0$ and $A, B, C \in \caC_m$,
\begin{align*}
 d_H(A \#_{\omega} B, C) &\leq \abs{1-\omega} d_H(A, C) + \omega d_H(B, C).
\end{align*}
\end{lemma}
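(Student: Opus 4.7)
The plan is to handle the two ranges $\omega\in[0,1]$ and $\omega>1$ separately. For $\omega\in[0,1]$ the inequality is exactly the classical geodesic convexity of the Hilbert metric on the PD cone, which is available in the literature (\cite[Corollary~1.1]{Gunawardena2003} cited immediately before the lemma), so nothing new is needed in this range. The substantive case is $\omega>1$, where $A\#_\omega B$ lies beyond $B$ along the geodesic through $A$ and $B$ and Busemann-type convexity no longer applies directly.

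The plan for $\omega>1$ rests on the identity
\[
d_H(B, A\#_\omega B) \;=\; (\omega-1)\, d_H(A,B),
\]
a companion to the well-known $d_H(A, A\#_\omega B) = \omega\, d_H(A,B)$. To establish it, the first step is to set $T = A^{-1/2} B A^{-1/2}$, so that $A\#_\omega B = A^{1/2} T^\omega A^{1/2}$ and $B = A^{1/2} T A^{1/2}$. Congruence invariance of $d_H$ then reduces the claim to computing $d_H(T, T^\omega)$. Since $T$ and $T^\omega$ are simultaneously diagonalizable, the matrix $T^{-\omega/2} T\, T^{-\omega/2} = T^{1-\omega}$ has eigenvalues $\lambda_i^{1-\omega}$, so its spectral condition number equals $\kappa(T)^{|1-\omega|}$. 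Taking logarithms gives $d_H(T,T^\omega) = |\omega-1|\log\kappa(T) = (\omega-1)\, d_H(A,B)$.

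Given this identity, the lemma for $\omega>1$ follows from two applications of the triangle inequality for $d_H$:
\[
d_H(A\#_\omega B,C) \;\le\; d_H(A\#_\omega B,B) + d_H(B,C) \;=\; (\omega-1)\, d_H(A,B) + d_H(B,C),
\]
and then bounding $d_H(A,B) \le d_H(A,C) + d_H(B,C)$ yields
\[
d_H(A\#_\omega B,C) \;\le\; (\omega-1)\, d_H(A,C) + \omega\, d_H(B,C),
\]
which matches the stated bound since $|1-\omega|=\omega-1$ in this range.

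The main obstacle is isolating the identity $d_H(B, A\#_\omega B) = (\omega-1)d_H(A,B)$; everything else is either the triangle inequality or the already-known $\omega\in[0,1]$ case. The identity itself is ultimately a direct spectral calculation, but it uses in an essential way that the geodesic endpoints are connected through the commuting pair $T, T^\omega$, which is what makes extending the convexity estimate past $\omega=1$ controllable.
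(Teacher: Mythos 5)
Your proposal is correct and follows essentially the same route as the paper: the case $\omega\in[0,1]$ is delegated to the known convexity result, the case $\omega>1$ hinges on the identity $d_H(A\#_\omega B, B)=|1-\omega|\,d_H(A,B)$ (the paper's Lemma~\ref{lem:Hilbert-delta}), and the remainder is two applications of the triangle inequality, exactly as in the paper. The only (cosmetic) difference is how that identity is verified: the paper computes via an SVD of $M=A^{-1/2}B^{1/2}$, whereas you invoke congruence invariance of $d_H$ to reduce to the commuting pair $(T,T^\omega)$ and diagonalize directly; both are short spectral calculations of the same fact.
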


To prove this lemma, we need the following property of geodesics.

\begin{lemma}\label{lem:Hilbert-delta}
For $\omega \ge 0$ and $A, B \in \caC_m$,
\begin{align*}
 d_H(A \#_\omega B, B) = \abs{1-\omega} d_H(A, B).
\end{align*}
\end{lemma}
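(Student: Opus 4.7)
The plan is to reduce everything to a computation with the eigenvalues of a single matrix by exploiting the congruence-invariance of the Hilbert metric, namely that $d_H(P X P^\top, P \tilde X P^\top) = d_H(X, \tilde X)$ for every invertible $P$, which is immediate from the definition of $d_H$ via $\kappa(\cdot)$.

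First, I set $M = A^{-1/2} B A^{-1/2} \in \caC_m$ so that $B = A^{1/2} M A^{1/2}$ and, by the definition~\eqref{eq:sharp} of $\#_\omega$,
\[
A \#_\omega B = A^{1/2} M^\omega A^{1/2}.
\]
Applying congruence by $P = A^{-1/2}$ reduces the claim to
\[
d_H(M^\omega, M) = |1-\omega|\, d_H(I, M),
\]
since similarly $d_H(A,B) = d_H(I,M)$. Thus the identity becomes a purely spectral statement about the single PD matrix $M$.

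Next, using $M^{-1/2} M^\omega M^{-1/2} = M^{\omega - 1}$, I get $d_H(M^\omega, M) = \log \kappa(M^{\omega-1})$. If $\lambda_{\max}$ and $\lambda_{\min}$ denote the extreme eigenvalues of $M$, then the eigenvalues of $M^{\omega-1}$ are $\lambda_i^{\omega - 1}$. A brief case split on the sign of $\omega - 1$ shows
\[
\kappa(M^{\omega - 1}) \;=\; \bigl(\lambda_{\max}/\lambda_{\min}\bigr)^{|\omega - 1|} \;=\; \kappa(M)^{|\omega - 1|},
\]
the point being that raising to a negative power reverses the ordering of positive eigenvalues, but the ratio of extremes then becomes the reciprocal and an absolute value absorbs this. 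Taking logarithms and recalling $\log \kappa(M) = d_H(I,M) = d_H(A,B)$ gives the conclusion.

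There is no real obstacle in this argument; the only mildly delicate step is the sign analysis on $\omega - 1$, which must also correctly handle the two boundary/degenerate cases $\omega = 0$ (where $A \#_0 B = A$) and $\omega = 1$ (where $A \#_1 B = B$), both of which are consistent with the claim.
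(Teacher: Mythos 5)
Your proof is correct. It follows the same basic strategy as the paper—reduce the Hilbert metric identity to a statement about eigenvalues of a single matrix—but the route is slightly different and cleaner in organization. The paper works with the (generally non-symmetric) matrix $A^{-1/2}B^{1/2}$, takes its singular value decomposition, and computes $B^{-1/2}(A\#_\omega B)B^{-1/2}$ directly, arriving at eigenvalues $\sigma_i^{2(\omega-1)}$. You instead first invoke congruence invariance of $d_H$ (by $P = A^{1/2}$) to reduce to the pair $(M^\omega, M)$ with $M = A^{-1/2}BA^{-1/2}$ positive definite; since $M$ and $M^\omega$ commute, the claim collapses to the scalar identity $\kappa(M^{\omega-1}) = \kappa(M)^{|\omega-1|}$, and no SVD is needed. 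Your version makes the underlying invariance explicit and avoids the auxiliary non-symmetric factor, which is a modest but genuine simplification; both proofs hinge on the same sign analysis in $\omega - 1$ and lead to the same conclusion.
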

\begin{proof}
Note that $A \#_\omega B = A^{1/2} (A^{-1/2} B A^{-1/2})^\omega A^{1/2}$. Let $M = A^{-1/2}B^{1/2}$ and $M = U \Sigma V^\top$ be the singular value decomposition of $M$, with $\Sigma = \diag(\sigma_1,\dots,\sigma_m)$. Then, 
\begin{align*}
 B^{-1/2} (A \#_\omega B) B^{-1/2} 
 &= B^{-1/2} A^{1/2} (A^{-1/2} B A^{-1/2})^\omega A^{1/2} B^{-1/2} \\
 &= M^{-1} (MM^\top)^\omega M^{-\top} \\
 &= V\Sigma^{-1} U^\top (U \Sigma^{2\omega} U^\top) U \Sigma^{-1} V^\top \\
 &= V\Sigma^{2(\omega - 1)} V^\top.
\end{align*}
This shows that the eigenvalues of $B^{-1/2} (A \#_\omega B) B^{-1/2}$ are $\sigma_i^{2(\omega -1)}$ for $i=1, \dots, m$. On the other hand, 
\begin{align*}
 B^{-1/2} A B^{-1/2} = (M^\top M)^{-1} = V \Sigma^{-2} V^\top
\end{align*}
so its eigenvalues are $\sigma_i^{-2}$. By the definition of the Hilbert metric, we hence have
\begin{align*}
 d_H(A, B) &= \log \left(\frac{\lambda_{\max}(B^{-1/2} A B^{-1/2})}{\lambda_{\min}(B^{-1/2} A B^{-1/2})} \right) = \log \left(\frac{\sigma_{\max}^2}{\sigma_{\min}^2}\right), \\
 d_H(A \#_\omega B, B) &= \log \left(\frac{\lambda_{\max}(B^{-1/2} (A \#_\omega B) B^{-1/2})}{\lambda_{\min}(B^{-1/2} (A \#_\omega B) B^{-1/2})} \right) = \log \left( \frac{\sigma_{\max}^{2\abs{\omega-1}}}{\sigma_{\min}^{2\abs{\omega-1}}} \right) = \abs{1-\omega} \log \left(\frac{\sigma_{\max}^2}{\sigma_{\min}^2} \right),
\end{align*}
which completes the proof.
\end{proof}

\begin{proof}[Proof of Lemma~\ref{lem:Hilbert-omega}]
For $\omega \in [0,1]$, the inequality is the well-known convexity of the Hilbert metric on symmetric cones; see, e.g.,~\cite[Corollary 1.1]{Gunawardena2003}. Therefore, it suffices to show the inequality for $\omega = 1 + \delta$ with $\delta > 0$. Then, using the triangle inequality and Lemma~\ref{lem:Hilbert-delta}, we have
\begin{align*}
 d_H(A \#_{\omega} B, C) %&= d_H(A \#_{1+\delta} B, C) \\
 &\leq d_H(A \#_{\omega} B, B) + d_H(B, C) \\
 &= \delta d_H(A, B) + d_H(B, C) \\
 &\le \delta d_H(A, C) + (1+\delta) d_H(B, C),
\end{align*}
which completes the proof.
\end{proof}

Using the Lipschitz constants $\Lambda_1$ and $\Lambda_2$ (see~\eqref{eq: contraction ratio}) of the solution maps $S_1$ and $S_2$, Lemma~\ref{lem:Hilbert-omega} allows to formulate recursive error estimates for the sequence $(X_t,Y_t)$ generated by the fixed-point iteration~\eqref{eq: overrelaxation Hilbert metric}, namely
\[
 d_H(X_{t+1}, X_*) = d_H(X_t \#_\omega S_1(Y_t), X_*) \le \abs{1-\omega} d_H(X_t,X_*) + \omega \Lambda_1 d_H(Y_t,Y_*)
\]
and
\begin{align*}
 d_H(Y_{t+1}, Y_*) 
 &= d_H(Y_t \#_\omega S_2(X_{t+1}), Y_*) \\
 &\le \abs{1-\omega} d_H(Y_t,Y_*) + \omega \Lambda_2 d_H(X_{t+1},X_*) \\
 &\le \omega \Lambda_2 \abs{1-\omega} d_H(X_t,X_*) + (\abs{1-\omega} + \omega^2 \Lambda_1 \Lambda_2) d_H(Y_t,Y_*).
\end{align*}
This can be written as a recursive vector-valued inequality
\[
\begin{pmatrix}
    d_H(X_{t+1}, X_*) \\
    d_H(Y_{t+1}, X_*)
\end{pmatrix}
\le
\begin{pmatrix}
    \abs{1-\omega} & \omega \Lambda_1 \\ 
    \omega \abs{1-\omega} \Lambda_2 & \abs{1-\omega} + \omega^2 \Lambda_1 \Lambda_2
\end{pmatrix}
\cdot
\begin{pmatrix}
    d_H(X_{t}, X_*) \\
    d_H(Y_{t}, X_*)
\end{pmatrix}.
\]
A sufficient condition for global convergence (up to rescaling), that is, $d_H(X_t, X_*) \to 0$ and $d_H(Y_t, X_*) \to 0$, hence is that the spectral radius of the matrix on the right side of this inequality is less than one. A direct computation of both of its eigenvalues gives the value $\abs{1-\omega} + \frac{(\omega \Lambda)^2}{2} + \sqrt{\frac{(\omega \Lambda)^4}{4} + (\omega \Lambda)^2 \abs{1-\omega}}$ with $\Lambda = \sqrt{\Lambda_1 \Lambda_2}$ for the spectral radius, leading to the following theorem, analogous to~\cite[Theorem~3]{Lehmann2022}.

\begin{theorem}\label{thm: global convergence overrelaxation}
Under the conditions of Theorem~\ref{thm: global convergence fpi}, the iterates $(X_t,Y_t)$ of the iteration~\eqref{eq: overrelaxation Hilbert metric} (Algorithm~\ref{algo: overrelaxation geodesic}) converge to $(X_*,Y_*)$ in the Hilbert metric at a linear rate for all $\omega \in (0, \frac{2}{1 + \sqrt{\Lambda_1 \Lambda_2}})$.
\end{theorem}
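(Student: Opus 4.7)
The plan is to complete the argument already laid out in the paragraph immediately preceding the theorem. The recursive vector inequality
\[
\begin{pmatrix} d_H(X_{t+1}, X_*) \\ d_H(Y_{t+1}, Y_*) \end{pmatrix}
\le
M_\omega
\begin{pmatrix} d_H(X_{t}, X_*) \\ d_H(Y_{t}, Y_*) \end{pmatrix},
\qquad
M_\omega = \begin{pmatrix} \abs{1-\omega} & \omega \Lambda_1 \\ \omega \abs{1-\omega} \Lambda_2 & \abs{1-\omega} + \omega^2 \Lambda_1 \Lambda_2 \end{pmatrix},
\]
follows from Lemma~\ref{lem:Hilbert-omega} combined with the Lipschitz bounds $\Lambda_1,\Lambda_2 < 1$ of $S_1, S_2$ in the Hilbert metric (valid under the positivity improving hypothesis carried over from Theorem~\ref{thm: global convergence fpi}). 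All that really remains is to prove $\rho(M_\omega) < 1$ on the asserted range and then invoke a standard spectral radius argument.

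First I would diagonalize $M_\omega$ explicitly. Writing $a = \abs{1-\omega}$ and $b = \omega^2 \Lambda_1 \Lambda_2$, the off-diagonal product is $a b$, so the characteristic polynomial telescopes to $\lambda^2 - (2a+b)\lambda + a^2 = 0$. Its larger root is $a + b/2 + \sqrt{b^2/4 + ab}$, giving exactly the formula for $\rho(M_\omega)$ stated in the text with $\Lambda = \sqrt{\Lambda_1\Lambda_2}$.

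Next I would solve $\rho(M_\omega) < 1$. Isolating the square root and squaring (after noting the necessary sign condition $1 - \abs{1-\omega} - (\omega\Lambda)^2/2 > 0$) reduces the inequality to $(\omega\Lambda)^2 < (1-\abs{1-\omega})^2$, equivalently $\omega\Lambda < 1 - \abs{1-\omega}$. I would then split cases: for $0 < \omega \le 1$ this becomes $\omega\Lambda < \omega$, automatic since $\Lambda < 1$; for $1 < \omega < 2$ it becomes $\omega(1+\Lambda) < 2$, giving the sharp cutoff $\omega < \tfrac{2}{1+\sqrt{\Lambda_1\Lambda_2}}$.

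Finally, since $\rho(M_\omega) < 1$, Gelfand's formula implies $\norm{M_\omega^t}^{1/t} \to \rho(M_\omega)$, so iterating the vector inequality yields geometric decay of both $d_H(X_t,X_*)$ and $d_H(Y_t,Y_*)$ at rate arbitrarily close to $\rho(M_\omega)$; because $d_H$ is scaling invariant, this is convergence in the sense of orbits, consistent with the formulation of Theorem~\ref{thm: global convergence fpi}. I do not foresee a real obstacle; the only mild technical care concerns tracking $\abs{1-\omega}$ through the squaring step and verifying the sign condition for the square root, both of which fall out cleanly from the two-case split above.
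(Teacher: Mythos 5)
Your proposal is correct and follows the same approach as the paper: it starts from the same recursive vector inequality (established via Lemma~\ref{lem:Hilbert-omega} and the Lipschitz constants $\Lambda_1, \Lambda_2$), computes the spectral radius of $M_\omega$, and shows $\rho(M_\omega) < 1$ precisely on the stated interval. The only difference is that you spell out the algebra (the determinant simplifying to $|1-\omega|^2$, the reduction to $\omega\Lambda < 1-|1-\omega|$, and the two-case split) which the paper leaves implicit by citing Lehmann et al.
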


The range for $\omega$ in this theorem might be very pessimistic, since $\frac{2}{1 + \sqrt{\Lambda_1 \Lambda_2}}$ can be very close to one. In particular, the optimal value $\omega_{\mathrm{opt}}$ in Theorem~\ref{thm: local convergence} is usually not covered. Whether global convergence is still ensured for larger values of $\omega$ remains an open theoretical problem.

\section{Numerical experiments}\label{sec: numerical experiments}

In this section, we present results of numerical experiments for comparing our SOR methods with the operator Sinkhorn iteration. We implemented three versions of overrelaxation, namely, PD-OR (Algorithm~\ref{algo: overrelaxation PD}), Cholesky-OR (Algorithm~\ref{algo: overrelaxation Cholesky}), and Geodesic-OR (Algorithm~\ref{algo: overrelaxation geodesic}) as well as the basic OSI (Algorithm~\ref{algo: OSI} with Cholesky factors) in Python, using Python 3.10.14, NumPy~1.26.4, and SciPy~1.10.1. Experimental results were computed on a machine with AMD EPYC 7452 CPU with 64 logical cores and \num{1007}GB memory. The fixed point versions of the standard OSI (Algorithms~\ref{algo: FFPI} and~\ref{algo: PD-FPI}) are omitted.

\subsection{Experiment setup}

We run our algorithms on two different instances of operator scaling that will be explained in the following subsections separately, using the following experiment setup in both cases.

\paragraph{Error measure.}
For each algorithm, we focus on the current scaling $\bA_t = L_t^{} \bA R_t^\top$ of the input collection  $\bA$  of the scaling problem.
In the standard OSI (Algorithm~\ref{algo: OSI}), $\bA_t$ is implicitly computed as $\bar \bA_t$ and one can obtain the corresponding scaling matrices $L_t$ and $R_t$ on the fly with \eqref{eq: formula Lt Rt}.
In order to measure the convergence error accurately, we use $\bA_t$ instead of $\bar \bA_t$ because $\bar \bA_t$ may not be in the orbit of the original input $\bA$ with respect to the left-right action as numerical errors accumulate.
In the other methods, we do not have such an issue because $\bA_t$ can be explicitly computed as $\bA_t = L_t \bA R_t$. 

For measuring convergence, we use the error measure 
\[
 \err(\bA_t) = \sqrt{\norm*{\sum_i A_{t,i}^{} A_{t,i}^\top - \frac{1}{m}I_m}_F^2 + \norm*{\sum_i A_{t,i}^\top A_{t,i}^{} - \frac{1}{n}I_n}_F^2}.
\]
It turns out that this error equals the norm of the gradient of $f$ (see~\eqref{eq: cost function}). Therefore, we call it the gradient norm.

\paragraph{Running time measure.}
We also measure the actual running time (wall time) as a more intuitive measure of time complexity. Since the running time often fluctuates over different runs even on the same instance, we run the algorithms 10 times on each instance and report the mean and standard deviation.

\paragraph{Choice of $\omega$.}
For our overrelaxation methods, we need to estimate $\omega_{\mathrm{opt}}$ based on Theorem~\ref{thm: local convergence}, which in turn requires the asymptotic convergence rate $\beta^2$ of the operator Sinkhorn iteration. However, we do not know $\beta^2$ in advance. It is hence common to use an adaptive strategy by monitoring the convergence of the standard method. In particular we adopt the proposed strategy from~\cite{Lehmann2022,Oseledets2023} (considered in similar form already in~\cite{Hageman1975}) as follows. Let $p$ be a positive integer parameter. Then we start by running our SOR methods with $\omega=1$ (which is equivalent to the operator Sinkhorn iteration) for the first $p$ iterations, and obtain an estimate ${\hat \beta}^2$ of $\beta^2$ by computing
\[
 {\hat \beta}^2 = \sqrt{\frac{\err_p}{\err_{p-2}}},
\]
where $\err_p$ denotes the gradient norm of the $p$th iterate. Then, from the $(p+1)$th iteration, we set $\omega = \hat{\omega}_{\mathrm{opt}}$ which is computed from the formula~\eqref{eq:omega-opt} with ${\hat \beta}^2$ instead of $\beta^2$. In our experiment, we take either $p = 10$ or $p = 5$. Note that in general we could use ${\hat \beta}^2 = (\frac{\err_p}{\err_{p-\ell}} )^{1/\ell}$ for positive integer $\ell \geq 2$, but $\ell = 2$ appeared to work quite reliably.

\paragraph{Initialization.} Algorithms~\ref{algo: overrelaxation PD}--\ref{algo: overrelaxation geodesic} require initial matrices. We simply used $(L_0, R_0) = (I_m, I_n)$ or $(X_0, Y_0) = (I_m, I_n)$ in the experiments.

\subsection{Experiment in frame scaling instances}

First, we conduct experiments in \emph{frame scaling} where we found overrelaxation to work extremely well. In this problem, we are given an initial frame $x_1, \dots, x_k \in \R^n$. The goal is to find a nonsingular matrix $P \in \GL_n(\R)$ and nonnegative scalars $\alpha_1, \dots, \alpha_k \geq 0$ such that the scaled frame $\tilde x_i = \alpha_i P x_i$ satisfies 
\begin{align*}
 \norm{\tilde x_i}_2 &= \frac{n}{k} \quad (i = 1, \dots, k), \\
 \sum_{i=1}^k \tilde x_i \tilde x_i^\top &= I_n.
\end{align*}
This problem has applications in signal processing~\cite{Holmes2004} and functional analysis~\cite{Barthe1998}. The essentially same problem is also known as \emph{Tyler's M-estimator} in statistics~\cite{Tyler1987,Franks2020}.

One can show that frame scaling is equivalent to operator scaling of a CP map $\Phi: \R^{n \times n}_{\mathrm{sym}} \to \R^{k \times k}_{\mathrm{sym}}$, where
\[
 \Phi(Z) = 
 \begin{pmatrix}
 x_1^\top Z x_1^{} & & \\
 & \ddots & \\
 & & x_k^\top Z x_k^{}     
 \end{pmatrix},
 \quad
 \Phi^*(Y) = \sum_{i=1}^k y_{ii}^{} x_i^{} x_i^\top,
\]
that is, the matrices $\bA = (A_1,\dots, A_k)$ are given as
\[
 A_i = e_i^{} x_i^\top.
\]
To see this, first observe that the optimal scaling matrix $L$ can be taken diagonal, since $\Phi(Z)$ is a diagonal matrix. Correspondingly, Sinkhorn type algorithms based on Cholesky decomposition will only produce diagonal $L_t$. Let $L = \diag(\ell_1, \dots, \ell_k)$ and $R \in \GL_n(\R)$. Then, $(L, R)$ is a solution to operator scaling of the above CP map $\Phi$ if and only if 
\begin{align*}
 \begin{pmatrix}
    \ell_1^2 (R x_1)^\top (R x_1) & & \\
    & \ddots & \\
    & & \ell_k^2 (R x_k)^\top (R x_k)
 \end{pmatrix} 
 &= \begin{pmatrix}
    \frac{1}{k} & & \\
    & \ddots & \\
    & & \frac{1}{k}
 \end{pmatrix}, \\
 \sum_{i=1}^k \ell_i^2 (R x_i)(R x_i)^\top &= \frac{1}{n}I_n.
\end{align*}
So $P = R$ and $\alpha_i = n\ell_i^2$ yield a solution to frame scaling, and vice versa.

\paragraph{Data generation.}
In the experiment, we set $n = 50$, $k = 55$, and generate an initial frame from the standard Gaussian distribution of $\R^n$. We run the algorithms on the same randomly generated instance for $200$ iterations where the overrelaxation is activated at iteration $p = 10$. Although this is a random instance, we report that the result is typical and observed the same behaviour on different random seeds.

\begin{figure}[t]
    \centering
    \subcaptionbox{\label{fig:frame_iter}}[.49\textwidth]{\includegraphics[width=\linewidth]{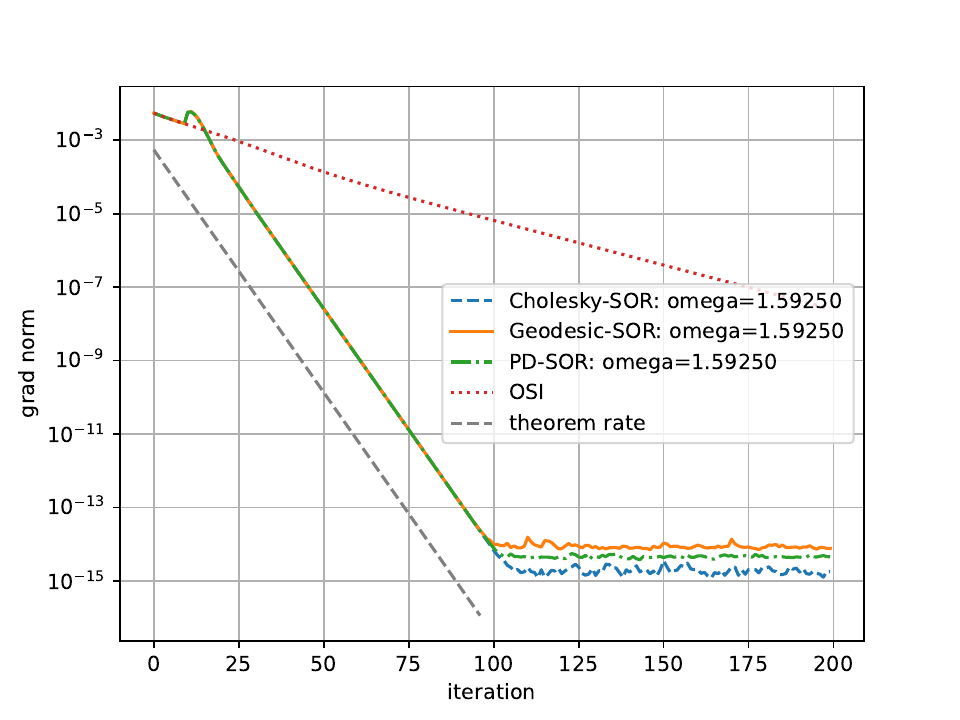}}
    \subcaptionbox{\label{fig:frame_time}}[.49\textwidth]{\includegraphics[width=\linewidth]{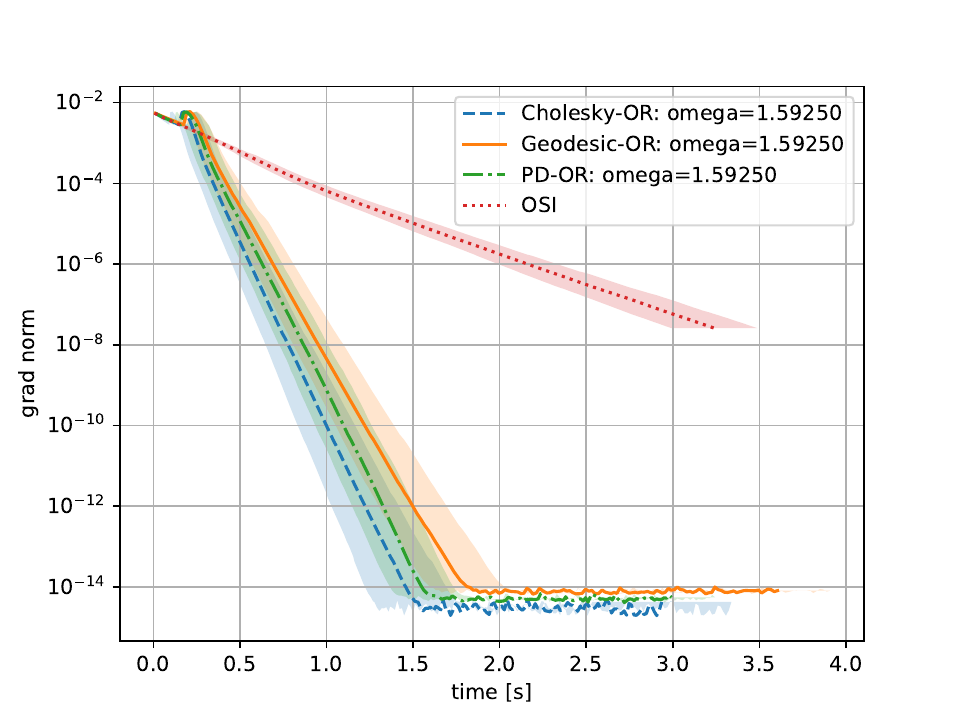}}
    \caption{Experimental results for frame scaling. The algorithms are applied to the same randomly generated instance. (\subref{fig:frame_iter})~Plot of the gradient norm against iterations. The dashed line (``theorem rate'') is the local convergence rate $\rho(\omega)$ from Theorem~\ref{thm: local convergence}. (\subref{fig:frame_time})~Plot of the gradient norm against running time. The thick line and shaded region represent the mean and the standard deviation of 10 runs on the same instance. Since both the input instance and the inital points are the same for all runs, this fluctuation is the running time fluctuation of the computing environment.\label{fig:frame}}
\end{figure}

\paragraph{Results.}
The result is shown in Figure~\ref{fig:frame}. Our SOR methods achieve an error of the order of $10^{-14}$ after about $100$ iterations, whereas the error of the operator Sinkhorn was still of the order of $10^{-8}$ after $200$ iterations; see Figure~\ref{fig:frame_iter}. We also plot the local convergence rate $\rho(\omega)$ as given in Theorem~\ref{thm: local convergence} with the actual local convergence rate $\beta^2$ of OSI. We can observe the slopes of the SOR methods match $\rho(\omega)$ quite well. We see that the convergence rates of all three SOR methods are the same, validating the local convergence result in Theorem~\ref{thm: local convergence}. We can observe a similar trend in running time (Figure~\ref{fig:frame_time}). In particular, our SOR methods achieve a desired accuracy much faster than the standard OSI. The geodesic SOR is slightly slower than the other SOR variants, which is expected because the computation of geodesics is more expensive than Cholesky decompositions.

\paragraph{Effect of $\omega$.} To see the effect of the relaxation parameter $\omega$, we also run Cholesky-OR with different fixed values of $\omega \in \{1.0, 1.2, 1.4, 1.6, 1.8\}$ (without the adaptive estimation) on the same frame scaling instance. Furthermore, we run it with the adaptively estimated parameter $\omega_{\mathrm{est}}$ and the optimal parameter $\omega_{\mathrm{opt}}$ \eqref{eq:omega-opt} with the actual local convergence rate $\beta^2$ of OSI. The result is shown in Figure~\ref{fig:varying_omega}. We can see that $\omega_{\mathrm{opt}} = 1.622$ works best as shown in Theorem~\ref{thm: local convergence}. Our adaptive estimation $\omega_{\mathrm{est}} = 1.593$ is close to $\omega_{\mathrm{opt}}$ and the convergence rate is near optimal. Although overrelaxation always accelerates OSI (i.e., $\omega = 1$), choosing $\omega$ too large (e.g., $\omega = 1.8$) leads to poor acceleration. This illustrates the importance of choosing an appropriate value of $\omega$ and shows that our adaptive estimation works quite well.

\begin{figure}[t]
    \centering
    \includegraphics[width=.5\linewidth]{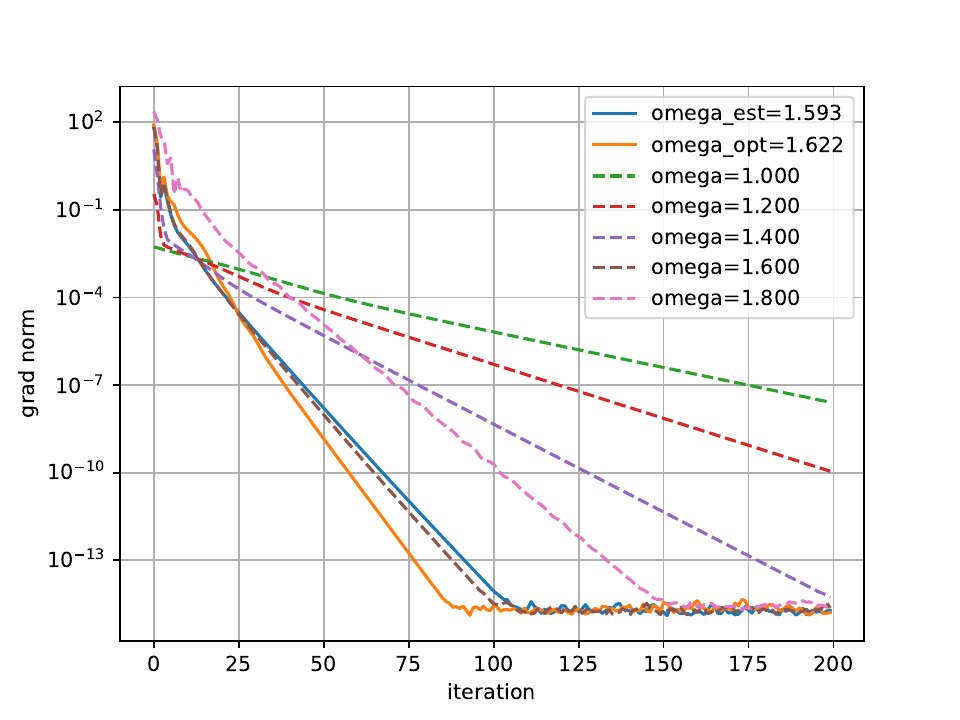}
    \caption{Effect of the relaxation parameter $\omega$. The optimal relaxation parameter \eqref{eq:omega-opt} is denoted by $\omega_{\mathrm{opt}}$ and the adaptive estimated parameter is by $\omega_{\mathrm{est}}$. The base method is Cholesky-OR.}\label{fig:varying_omega}
\end{figure}

\subsection{Ill-conditioned operator}

To observe the behaviour of the algorithms in an ill-conditioned setting and exhibit some of the limitations of overrelaxation, we consider the following artificial instances of operator scaling.

\paragraph{Data generation.}
Let $H$ be the $n \times n$ Hilbert matrix, i.e., $H_{ij} = \frac{1}{i+j-1}$ for $i,j = 1, \dots, n$. It is well-known that the Hilbert matrix is a typical example of ill-conditioned matrices.
For $i = 1, \dots, k$, we set $A_i = Q_i H \in \R^{n \times n}$, where $Q_i$ is a uniform random $n \times n$ orthogonal matrix. In the experiment, we set $n = 5$ and $k = 7$. The condition number of $A_i$ is about~\num{476600}. We run the algorithms on the same random instance for $50$ iterations where overrelaxation is activated at iteration $p = 5$.

\paragraph{Results.}
The result is shown in Figure~\ref{fig:Hilbert}. Again, the overrelaxation accelerates the convergence rate after activated, although not as much as in the previous experiment. However, our SOR methods stagnate at an accuracy around $10^{-6}$, whereas OSI achieves the accuracy of order of $10^{-11}$. The running time of OSI and the SOR methods are almost the same, but Geodesic-OR is slightly slower. Therefore, in this experiment overrelaxation is less efficient.

The degraded accuracy of the SOR methods may be explained as follows. The main step in the algorithms is the computation of Cholesky decompositions and inverses of intermediate matrices which are sums of Gramians. In OSI these are Gramians of $\bar A_{i,t} = L_t A_i R_t^\top$. Upon convergence to solutions of the operator scaling problem, the sum of these Gramians converge to well conditioned (normalized) identity matrices. This gradually compensates (to a certain limit) for the bad condition of the initial $A_i$. In contrast, in the SOR methods, which are based on either Algorithm~\ref{algo: FFPI} or~\ref{algo: PD-FPI}, sums of Gramians of only ``half-scaled'' matrices such as $A_i^{} R_t^\top$ or $L_{t+1} A_i$ appear as intermediate matrices. Hence the ill-conditioning of the $A_i$ is never fully compensated. This may also explain, why OSI approximately achieves the double numerical accuracy compared to the SOR methods ($10^{-11}$ vs. $10^{-6}$). Note that Algorithm~\ref{algo: FFPI} and~\ref{algo: PD-FPI} without overrelaxation would suffer from the same limitation of the numerical accuracy in this example (results not shown). This suggests that it would be beneficial to combine OSI (Algorithm~\ref{algo: OSI}) directly with overrelaxation. This is left for future work.

\begin{figure}[t]
    \centering
    \subcaptionbox{\label{fig:Hilbert_iter}}[.49\textwidth]{\includegraphics[width=\linewidth]{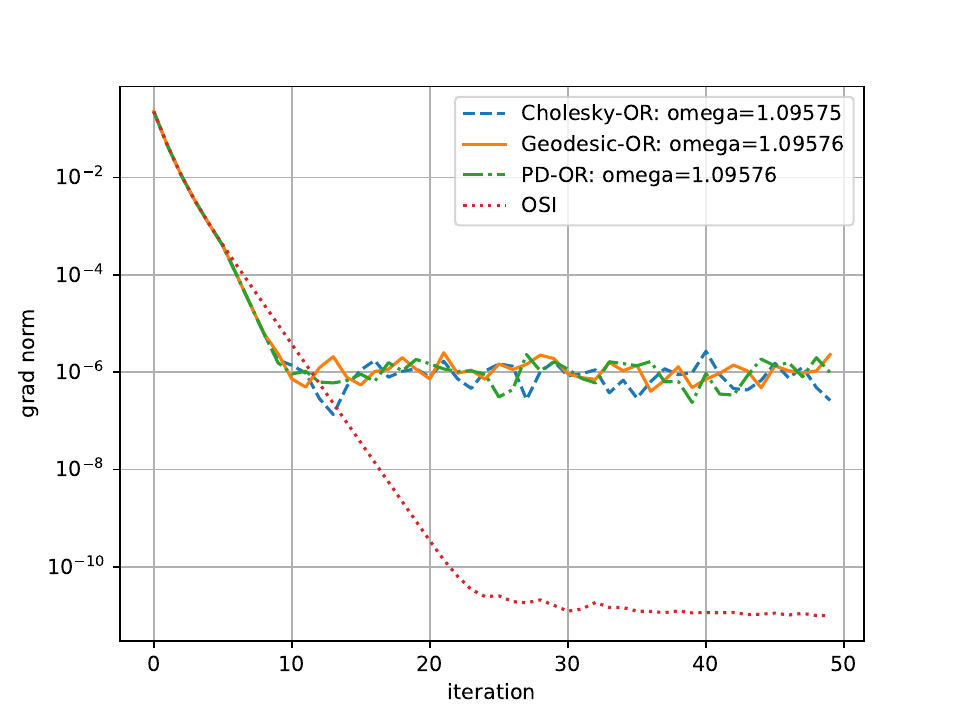}}
    \subcaptionbox{\label{fig:Hilbert_time}}[.49\textwidth]{\includegraphics[width=\linewidth]{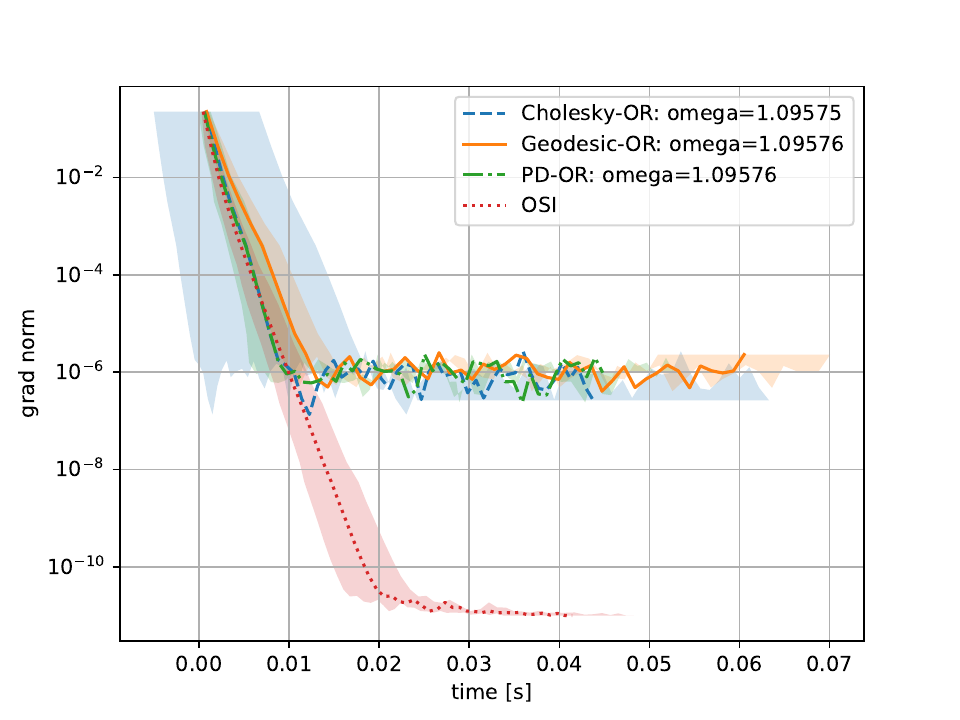}}
    \caption{Experimental results for ill-conditioned operators. (\subref{fig:Hilbert_iter}) Plot of the gradient norm against iterations. (\subref{fig:Hilbert_time}) Plot of the gradient norm against running time. The thick line and shaded region represent the mean and the standard deviation of 10 runs on the same instance. Since both the input instance and the inital points are the same for all runs, this fluctuation is the running time fluctuation of the computing environment.\label{fig:Hilbert}}
\end{figure}

\section{Conclusion}\label{sec: conclusion}
We investigated different variants of overrelaxation for the operator Sinkhorn iteration based on its interpretation as an alternating fixed-point scheme on cones of symmetric positive definite matrices. The numerically most efficient version operates directly on the Cholesky factors (Algorithm~\ref{algo: overrelaxation Cholesky}), whereas the theoretically most appealing version uses overrelaxation along geodesics in the affine invariant metric on positive definite matrices (Algorithm~\ref{algo: overrelaxation geodesic}). For this geodesic version we were able to establish global convergence in a limited (and presumably too pessimistic) range of relaxation parameters for operator scaling problems with a positivity improving CP map. However, near a fixed-point all proposed versions were shown to be equivalent to first-order, and the local convergence rate of all of them could hence be deduced by analyzing the linearized SOR iteration, taking their orbital equivariance into account. Numerical experiments illustrate that overrelaxation can significantly accelerate the difficult operator scaling task at practically no additional cost (at least for Algorithm~\ref{algo: overrelaxation Cholesky}) using an easy heuristic for choosing an almost optimal relaxation parameter. However, on ill-conditioned instances our current versions of overrelaxation may suffer from a limited numerical accuracy. Addressing this issue will be one direction for future work. Another one is to establish the global convergence for a larger range of relaxation parameters by exploiting the geodesic convexity of the underlying problem more explicitly.

\appendix
\section{Appendix}

\subsection{Proof of Proposition~\ref{prop: equivalence of algorithms}}\label{sec: proof proposition}

Obviously, for $t=0$ we have
\[
 \bar C_0^{} \bar C_0^\top = \sum_{i=1}^k A_i^{} A_i^\top = C_0^{} C_0^\top,
\]
and hence $C_0 = \bar C_0^{} P_1^\top$ for an orthogonal $P_1$ (see, e.g.,~\cite[Lemma~2.1]{Burer2005}). As a result,
\[
 L_1 = P_1 \bar L_1.
\]
This then also entails
\[
 \bar D_0^{} \bar D_0^\top = \sum_{i=1}^k A_i^\top \bar L_1^\top \bar L_1^{} A_i = \sum_{i=1}^k A_i^\top L_1^\top L_1^{} A_i = D_0^{} D_0^\top,
\]
hence $D_0 = \bar D_0^{} Q_1^\top$ with orthogonal $Q_1$, or
\[
 R_1 = Q_1 \bar R_1.
\]
We have thus shown
\[
 \bA_1 = L_1 \bA R_1^\top = P_1 \bar L_1 \bA \bar R_1^\top Q_1^\top = P_1 \bar{\bA}_1 Q_1^\top.
\]

We proceed with the induction step $t \to t+1$. By induction hypothesis $\bA_t = P_t^{} \bar{\bA}_t^{} Q_t^\top$, or equivalently, $\bar{\bA}_t = P_t^\top \bA_t^{} Q_t$, we have
\begin{align*}
 \bar C_t^{} \bar C_t^{\top} 
 = \sum_{i=1}^k \bar A_{t,i}^{} \bar A_{t,i}^\top 
 &= P_t^{\top} \left( \sum_{i=1}^k A_{t,i}^{} A_{t,i}^\top \right) P_t^{} \\
 &= P_t^{\top} L_t^{} \left( \sum_{i=1}^k A_{i}^{} R_t^\top R_t^{} A_{i}^\top \right) L_t^\top P_t^{} 
 = P_t^{\top} L_t^{} C_t^{} (P_t^{\top} L_t^{} C_t^{})^\top.
\end{align*}
This gives
\[
 P_t^{\top} L_t^{} C_t^{} = \bar C_t^{} P_{t+1}^\top
\]
for an orthogonal matrix $P_{t+1}$, and hence
\[
 C_t^{} = L_{t}^{-1} P_t \bar C_t^{} P_{t+1}^\top.
\]
Therefore,
\[
 L_{t+1} 
 = \frac{1}{\sqrt{m}} C_t^{-1} 
 = \frac{1}{\sqrt{m}} (L_{t}^{-1} P_t \bar C_t^{} P_{t+1}^\top)^{-1} 
 = \frac{1}{\sqrt{m}} P_{t+1}^{} \bar C_t^{-1} P_t^{-1} L_t 
 = P_{t+1}^{} \bar L_{t+1} P_t^{\top} L_t,
\]
or equivalently
\[
 \bar L_{t+1} = P_{t+1}^\top L_{t+1} L_t^{-1} P_{t}^{}.
\]
Using again the induction hypothesis this next yields $\bar L_{t+1} \bar{\bA}_t = P_{t+1}^\top L_{t+1}^{} \bA R_t^\top Q_t^\top$ and thus
\[
 \bar D_t^{} \bar D_t^\top = \sum_{i=1}^k \bar A_{t,i}^\top \bar L_{t+1}^\top \bar L_{t+1}^{} \bar A_{t,i} = Q_t R_t \left(\sum_{i=1}^k A_i^\top L_{t+1}^\top L_{t+1}^{} A_i^{} \right) R_t^\top Q_t^\top = (Q_t^{} R_t^{} D_t^{}) (Q_t^{} R_t^{} D_t^{})^\top.
\]
This shows $Q_t^{} R_t^{} D_t^{} = \bar D_t^{} Q_{t+1}^\top$, or
\[
 R_{t+1} = \frac{1}{\sqrt{n}} D_t^{-1} = \frac{1}{\sqrt{n}} Q_{t+1}^{} \bar D_t^{-1} Q_t R_t
 = Q_{t+1}^{} \bar R_{t+1}^{} Q_t R_t.
\]
Putting things together, we finally obtain
\begin{align*}
 \bA_{t+1} 
 = L_{t+1} \bA R_{t+1}^\top 
 &= P_{t+1} \bar L_{t+1} P_t^{\top} L_t \bA R_t^\top Q_t^\top \bar R_{t+1}^\top Q_{t+1}^\top \\ 
 &= P_{t+1} \bar L_{t+1} \bar{\bA}_t \bar R_{t+1}^\top Q_{t+1}^\top = P_{t+1} \bar{\bA}_{t+1} Q_{t+1}^\top,
\end{align*}
concluding the induction step.\hfill  

\subsection{Proof of Lemma~\ref{lem: derivative of geodesic}}\label{sec: proof lemma}

The Taylor series $x^\omega = \sum_{k=0}^\infty \binom{\omega}{k}(x-1)^k$ converges for $\abs{x-1} < 1$, where $\binom{\omega}{k} = \frac{\omega (\omega - 1) \dots (\omega - k + 1)}{k!}$ is the generalized binomial coefficient. Since for sufficiently small perturbations $H$ and $\tilde H$ any eigenvalue $\lambda$ of $(X + H)^{-1/2}(X+\tilde H)(X+H)^{-1/2}$ satisfies $\abs{ \lambda - 1} < 1$, we can apply the Taylor series to this matrix (see, e.g.,~\cite[Theorem~4.7]{Higham2008}), which gives
\[
 [(X + H)^{-1/2}(X+\tilde H)(X+H)^{-1/2}]^\omega = \sum_{k=0}^\infty \binom{\omega}{k}[ I - (X + H)^{-1/2}(X+\tilde H)(X+H)^{-1/2} ]^k.
\]
Since for $k \ge 2$ we have
\begin{align*}
 [I - (X + H)^{-1/2}(X+\tilde H)(X+H)^{-1/2}]^k &= [I - (X^{-1/2} + O(\|H\|))(X+\tilde H)(X^{-1/2}+O(\|H\|)]^k \\ &= [I - X^{-1/2} X X^{-1/2} + O(\| H \|) + O(\| \tilde H \|) ]^k \\ &= O(\| (H,\tilde H) \|^2),
\end{align*}
we obtain the first-order expansion 
\[
 [(X + H)^{-1/2}(X+\tilde H)(X+H)^{-1/2}]^\omega = I + \omega [I - (X + H)^{-1/2}(X+\tilde H)(X+H)^{-1/2}] + O(\| (H,\tilde H) \|^2),
\]
where $O(\|(H,\tilde H)\|)^2$ denotes terms of at least quadratic order in $H$ and $\tilde H$. Let
\[
 (X+H)^{-1/2} = X^{-1/2} + D[H] + O(\| H \|^2)
\]
with $H \mapsto D[H]$ denoting the Fr\'echet derivative of $A \mapsto A^{-1/2}$ at $X$. Inserting this in the above expression gives us the first-order expansion
\begin{equation}\label{eq: expansion omega}
\begin{aligned}
 &[(X + H)^{-1/2}(X+\tilde H)(X+H)^{-1/2}]^\omega \\
 {}={} &I + \omega (D[H] X^{1/2} + X^{1/2} D[H] + X^{-1/2}\tilde H X^{-1/2}) + O(\| (H,\tilde H) \|^2).
\end{aligned}
\end{equation}
We next use that
\begin{equation}\label{eq: expansion square root}
 (X + H)^{1/2} = X^{1/2} - X^{1/2} D[H] X^{1/2} + O(\| H \|^2).
\end{equation}
This relation between the derivative of $X^{1/2}$ and $X^{-1/2}$ can be derived by differentiating the identity $X^{1/2} X^{-1/2} = I$. Inserting~\eqref{eq: expansion square root} and~\eqref{eq: expansion omega} into the definition of $(X+H)\#_{\omega} (X+\tilde H)$ and rearranging gives
\[
 (X+H)\#_{\omega} (X+\tilde H) = X + \omega \tilde H - (1-\omega)(X D[H] X^{1/2} + X^{1/2}D[H]X) + O(\| (H,\tilde H) \|^2).
\]
From differentiating the equation $X \cdot (X^{-1/2})^2 \cdot X = X$ using product and chain rule, one can deduce the identity
\[
 X D[H] X^{1/2} + X^{1/2}D[H]X = - H, 
\]
which completes the proof. 

\subsection{Geodesic convexity}\label{sec:g-convex-f}

Geodesic convexity (\emph{g-convexity} in the following) generalizes the concept of convexity from Euclidean spaces to Riemannian manifolds. We refer to~\cite[Chapter~11]{Boumal2023} and~\cite{Udriste1994} for details on g-convexity. Here, we focus on the specific manifold $\caC_m$ of $m \times m$ symmetric positive definite matrices, endowed with the \emph{affine invariant metric}
\[ 
 g(H, \tilde H)_{X} = \tr(H X^{-1} \tilde H X^{-1}),
\]
where $X \in \caC_m$ and $H, \tilde H \in \R^{m \times m}_{\mathrm{sym}}$. 
In this metric, $\caC_m$ becomes uniquely geodesic and the geodesic between two points $X, \tilde X \in \caC_m$ is precisely given by $X \#_\omega \tilde X$, $\omega \in [0,1]$, as defined in~\eqref{eq:sharp}~\cite[Theorem~6.1.6]{Bhatia2007}. A function $f: \caC_m \to \R$ is said to be g-convex (with respect to the affine-invariant metric) if
\[
 f(X \#_\omega \tilde X) \leq (1-\omega)f(X) + \omega f(\tilde X)
\]
for any $X, \tilde X \in \caC_m$ and $\omega \in [0,1]$. Functions for which always equality holds in this definition are said to be g-affine. The following lemma is useful to check the g-convexity.

\begin{lemma}[{\cite[Lemma~3.6]{Duembgen2016}}]\label{lem:g-convex}
 A function $f: \caC_m \to \R$ is g-convex if and only if for any $B \in \GL_m(\R)$ the function $f_B(x) = f(B\diag(e^x)B^\top)$ is convex in $x \in \R^m$, where 
 \[
 \diag(e^x) = \begin{pmatrix}
 e^{x_1} & & \\
 & \ddots & \\
 & & e^{x_m}
 \end{pmatrix}. 
\]
\end{lemma}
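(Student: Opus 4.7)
The plan is to reduce g-convexity on $\caC_m$ to ordinary convexity on $\R^m$ via the parameterization $x \mapsto B\diag(e^x)B^\top$, exploiting two ingredients: (i) the congruence action $X \mapsto BXB^\top$ sends geodesics to geodesics, and (ii) geodesics between commuting positive diagonal matrices have a very explicit entrywise form.

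First I would establish the congruence identity
\[
(BXB^\top)\#_\omega(B\tilde XB^\top) = B(X\#_\omega\tilde X)B^\top.
\]
This is immediate for orthogonal $B=Q$ because $(QXQ^\top)^\alpha=QX^\alpha Q^\top$ for every real $\alpha$, and the general case follows via the polar decomposition $B=QP$; alternatively, one can simply invoke that $X \mapsto BXB^\top$ is an isometry of the affine invariant metric and hence maps geodesics to geodesics. Second, for $D_1=\diag(e^x)$ and $D_2=\diag(e^{\tilde x})$ all powers and square roots in~\eqref{eq:sharp} act entrywise, giving $D_1\#_\omega D_2=\diag(e^{(1-\omega)x+\omega\tilde x})$. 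Combining these, the geodesic joining $B\diag(e^x)B^\top$ and $B\diag(e^{\tilde x})B^\top$ equals $B\diag(e^{(1-\omega)x+\omega\tilde x})B^\top$, so Euclidean convex combinations in $x$ correspond precisely to geodesics in $\caC_m$.

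The forward direction is then immediate: g-convexity of $f$ applied to $X=B\diag(e^x)B^\top$ and $\tilde X=B\diag(e^{\tilde x})B^\top$ gives $f_B((1-\omega)x+\omega\tilde x)\leq(1-\omega)f_B(x)+\omega f_B(\tilde x)$. For the reverse direction, given arbitrary $X,\tilde X\in\caC_m$, I would construct a common $B$ realizing both endpoints by simultaneous diagonalization: spectrally decompose $X^{-1/2}\tilde XX^{-1/2}=U\Lambda U^\top$ with $U$ orthogonal and $\Lambda=\diag(e^{\tilde x})$ positive diagonal, and set $B=X^{1/2}U$. Then $BB^\top=X$ and $B\Lambda B^\top=\tilde X$, so $X=B\diag(e^0)B^\top$ and $\tilde X=B\diag(e^{\tilde x})B^\top$; ordinary convexity of $f_B$ along the segment from $0$ to $\tilde x$, combined with the geodesic identity above, yields exactly the g-convexity inequality for $f$ between $X$ and $\tilde X$.

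The only genuinely geometric step is the congruence invariance of~\eqref{eq:sharp}; everything else is spectral decomposition and definition-chasing. I expect the main bookkeeping obstacle to be handling this congruence identity cleanly for non-orthogonal $B$, though via polar decomposition it reduces to a short computation, and it can also be viewed as a direct consequence of affine invariance of the underlying Riemannian metric.
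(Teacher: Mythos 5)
The paper does not actually prove this lemma; it is cited verbatim from D\"umbgen and Tyler (2016), Lemma~3.6, and used as a black box to establish Lemmas~\ref{lem:jointly g-convex} and~\ref{lem:g-convex-XAYA}. Your proof is correct and self-contained. Both directions are handled cleanly: the congruence identity $(BXB^\top)\#_\omega(B\tilde XB^\top)=B(X\#_\omega\tilde X)B^\top$ can be checked directly from $Z\#_\omega\tilde Z = Z(Z^{-1}\tilde Z)^\omega$ and the fact that conjugation commutes with matrix powers, or (as you note) by invoking that congruence is an isometry of the affine invariant metric; the diagonal reduction $\diag(e^x)\#_\omega\diag(e^{\tilde x})=\diag(e^{(1-\omega)x+\omega\tilde x})$ is immediate; and for the reverse direction your choice $B=X^{1/2}U$, with $X^{-1/2}\tilde XX^{-1/2}=U\Lambda U^\top$, correctly realizes $X=BB^\top$ and $\tilde X=B\Lambda B^\top$ with a common $B$, so convexity of $f_B$ along the segment from $0$ to $\log\Lambda$ gives the g-convexity inequality. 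This is the standard argument for this equivalence and, to the extent one can tell without reproducing the cited reference, it matches the intended proof. One minor remark: the polar-decomposition route you mention still leaves the PD factor to handle, so the cleaner justifications of the congruence identity are either the direct computation or the isometry argument, both of which you already indicate.
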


We say that $f : \caC_m \times \caC_n \to \R$ is jointly g-convex if for $X, \tilde X \in \caC_m$ and $Y, \tilde Y \in \caC_n$, we have $f(X \#_\omega \tilde X, Y \#_\omega \tilde Y) \leq (1-\omega)f(X, Y) + \omega f(\tilde X, \tilde Y)$ for $\omega \in [0,1]$. In other words, $f$ is g-convex in the product manifold $\caC_m \times \caC_n$. With a similar idea as in the proof of Lemma~\ref{lem:g-convex}, one can show the following.

\begin{lemma}\label{lem:jointly g-convex}
A function $f: \caC_m \times \caC_n \to \R$ is jointly g-convex if and only if for any $(B, C) \in \GL_m(\R) \times \GL_n(\R)$, the function $f_{B,C}(x, y) = f(B\diag(e^x)B^\top, C\diag(e^y)C^\top)$ is jointly convex in $(x, y) \in \R^m \times \R^n$. 
\end{lemma}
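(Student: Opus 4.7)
The plan is to mimic the proof of Lemma~\ref{lem:g-convex} in the product manifold $\caC_m \times \caC_n$, exploiting the fact that geodesics in the product are pairs of geodesics in the factors, and that joint g-convexity is a statement along such product geodesics. These are in turn parametrized by straight line segments in $\R^m \times \R^n$ via $(x,y) \mapsto (B\diag(e^x)B^\top, C\diag(e^y)C^\top)$.

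The key technical ingredient that I would establish first is the identity
\[
B \diag(e^{(1-\omega) x_0 + \omega x_1}) B^\top = \bigl( B \diag(e^{x_0}) B^\top \bigr) \#_\omega \bigl( B \diag(e^{x_1}) B^\top \bigr)
\]
for every $B \in \GL_m(\R)$, $x_0, x_1 \in \R^m$, $\omega \in [0,1]$, and its analogue in the $C$-factor. This follows from the congruence invariance $(MAM^\top) \#_\omega (M \tilde A M^\top) = M(A \#_\omega \tilde A)M^\top$ (which is immediate from the closed form $A \#_\omega \tilde A = A (A^{-1}\tilde A)^\omega$) together with the direct computation $\diag(e^{x_0}) \#_\omega \diag(e^{x_1}) = \diag(e^{(1-\omega)x_0 + \omega x_1})$.

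With this identity, the forward direction is essentially a tautology: given any $(B,C)$ and any two points $(x_0, y_0), (x_1, y_1) \in \R^m \times \R^n$, the image of the straight segment $\omega \mapsto ((1-\omega)x_0+\omega x_1,(1-\omega)y_0+\omega y_1)$ under the parametrization is precisely the product geodesic in $\caC_m \times \caC_n$ connecting $(B\diag(e^{x_i})B^\top, C\diag(e^{y_i})C^\top)$ for $i=0,1$, so the joint g-convexity inequality for $f$ translates into the joint convexity inequality for $f_{B,C}$. For the converse I would use the standard simultaneous parametrization: given $X, \tilde X \in \caC_m$, set $B = X^{1/2} U$ where $U$ is an orthogonal matrix diagonalizing $X^{-1/2}\tilde X X^{-1/2} = U\diag(d)U^\top$ with $d > 0$; then $X = B\diag(e^{0})B^\top$ and $\tilde X = B\diag(e^{\log d})B^\top$. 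Construct $C$, $y_0 = 0$, $y_1$ analogously from $(Y, \tilde Y)$. The hypothesized joint convexity of the resulting $f_{B,C}$ along the segment from $(0,0)$ to $(\log d, y_1)$, combined with the same product-geodesic identity, yields the joint g-convexity inequality for $f$ between $(X,Y)$ and $(\tilde X, \tilde Y)$.

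No real obstacle is anticipated. The only two nontrivial ingredients are the congruence invariance of $\#_\omega$ and the simultaneous parametrization of a pair of PD matrices, both of which are classical. The proof of Lemma~\ref{lem:g-convex} transfers verbatim to the product setting by running the single-factor argument in parallel in each component.
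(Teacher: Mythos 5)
Your proof is correct and takes essentially the approach the paper points to (``with a similar idea as in the proof of Lemma~\ref{lem:g-convex}''): you run the single-factor parametrization argument of that lemma in parallel in the two components of $\caC_m \times \caC_n$. The two ingredients you isolate --- congruence invariance of $\#_\omega$ together with the diagonal computation to get the product-geodesic identity, and the simultaneous diagonalization $B = X^{1/2}U$ for the converse --- are exactly what makes that transfer work.
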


Here is an example of jointly g-convex functions.

\begin{lemma}\label{lem:g-convex-XAYA}
Let $A \in \R^{m \times n}$. Then $f(X, Y) = \tr(XAYA^\top)$ is jointly g-convex in $\caC_m \times \caC_n$.
\end{lemma}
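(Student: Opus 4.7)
The plan is to invoke Lemma~\ref{lem:jointly g-convex} and reduce the claim to an elementary Euclidean convexity check after passing to the log-parametrization. Concretely, fix $(B,C) \in \GL_m(\R) \times \GL_n(\R)$ and consider
\[
f_{B,C}(x,y) = \tr\bigl(B\diag(e^x)B^\top A\, C\diag(e^y)C^\top A^\top\bigr).
\]
I would simplify this using the cyclic property of the trace, introducing the auxiliary matrix $M = B^\top A C \in \R^{m \times n}$. This turns the expression into $\tr(\diag(e^x)\, M\, \diag(e^y)\, M^\top)$.

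Next, I would expand this trace entry-wise. Writing $M = (m_{ij})$, a direct computation yields
\[
f_{B,C}(x,y) = \sum_{i=1}^{m}\sum_{j=1}^{n} m_{ij}^2\, e^{x_i + y_j}.
\]
Each summand is a nonnegative coefficient times the exponential of the linear functional $(x,y) \mapsto x_i + y_j$, hence convex on $\R^m \times \R^n$. A nonnegative combination of convex functions is convex, so $f_{B,C}$ is jointly convex. By Lemma~\ref{lem:jointly g-convex}, this establishes the joint g-convexity of $f(X,Y) = \tr(XAYA^\top)$.

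There is no real obstacle here beyond the bookkeeping: the only substantive step is identifying the ``log-trick'' from Lemma~\ref{lem:jointly g-convex} as the right mechanism, after which the computation is transparent. If desired, one could alternatively verify the inequality
\[
\tr\bigl((X \#_\omega \tilde X) A (Y \#_\omega \tilde Y) A^\top\bigr) \le (1-\omega)\tr(XAYA^\top) + \omega \tr(\tilde X A \tilde Y A^\top)
\]
directly for $\omega \in [0,1]$ using operator-geometric-mean inequalities, but the reduction via Lemma~\ref{lem:jointly g-convex} is by far the cleanest route.
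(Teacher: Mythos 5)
Your proposal is correct and follows essentially the same route as the paper: both invoke Lemma~\ref{lem:jointly g-convex}, set $D = B^\top A C$ (your $M$), expand the trace to $\sum_{i,j} d_{ij}^2 e^{x_i + y_j}$, and conclude by convexity of nonnegative combinations of exponentials of linear functionals.
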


\begin{proof}
By Lemma~\ref{lem:jointly g-convex}, it suffices to show that the function
\[ 
 f_{B,C}(x,y) = \tr(B\diag(e^x)B^\top A C\diag(e^y) C^\top A^\top) 
\]
is jointly convex in $(x,y) \in \R^m \times \R^n$ for any $(B, C) \in \GL_m(\R) \times \GL_n(\R)$. Let $D = B^\top A C \in \R^{m \times n}$ so that $f_{B,C}(x,y) = \tr(\diag(e^x) D \diag(e^y) D^\top)$. Then, $(D \diag(e^y) D^\top)_{i,j} = \sum_{k=1}^n d_{ik}e^{y_k}d_{jk}$ for $i, j = 1, \dots, m$. So $f_{B,C}(x,y) = \sum_{i=1}^m \sum_{k=1}^n d_{ik}^2 e^{x_i + y_k}$, which is evidently jointly convex in $(x,y)$.
\end{proof}

Here is the main result of this section.

\begin{lemma}\label{lem:g-convex-f}
Let $f: \caC_m \times \caC_n \to \R$ be the function \eqref{eq: cost function}, namely, 
\[ 
 f(X,Y) = \tr(X\Phi(Y)) - \frac{1}{m}\log\det(X) - \frac{1}{n}\log\det(Y),
\]
where $\Phi: \R^{m \times m}_{\mathrm{sym}} \to \R^{n \times n}_{\mathrm{sym}}$ is a CP map. Then $f$ is jointly g-convex in $(X,Y)$.
\end{lemma}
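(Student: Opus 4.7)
The plan is to decompose $f$ into a sum of three pieces and verify joint g-convexity of each piece separately, using that sums of jointly g-convex functions are jointly g-convex (immediate from the definition). The three pieces are the bilinear interaction $(X,Y) \mapsto \tr(X\Phi(Y))$, and the two log-barriers $X \mapsto -\tfrac{1}{m}\log\det(X)$ and $Y \mapsto -\tfrac{1}{n}\log\det(Y)$.

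For the bilinear term I would use the Kraus representation $\Phi(Y) = \sum_{i=1}^k A_i Y A_i^\top$ to write
\[
\tr(X \Phi(Y)) = \sum_{i=1}^k \tr(X A_i Y A_i^\top).
\]
Each summand is jointly g-convex on $\caC_m \times \caC_n$ by Lemma~\ref{lem:g-convex-XAYA}, so the sum is jointly g-convex as well.

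For the two log-determinant terms, the key observation is that each is actually \emph{g-affine} rather than merely g-convex, because $\log\det$ linearizes in the exponential coordinates used by Lemma~\ref{lem:g-convex}. Concretely, for any $B \in \GL_m(\R)$ one computes
\[
\log\det(B \diag(e^x) B^\top) = \log\det(BB^\top) + \sum_{i=1}^m x_i,
\]
which is affine in $x \in \R^m$. Applying Lemma~\ref{lem:g-convex} in both directions (since both $\log\det$ and $-\log\det$ pull back to convex functions in exponential coordinates) shows that $X \mapsto -\tfrac{1}{m}\log\det(X)$ is g-affine on $\caC_m$; analogously $Y \mapsto -\tfrac{1}{n}\log\det(Y)$ is g-affine on $\caC_n$. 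Since each depends only on a single factor of the product manifold, both are trivially jointly g-affine on $\caC_m \times \caC_n$, hence in particular jointly g-convex.

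There is no substantial obstacle once the decomposition is in place; the only non-obvious step is recognizing that the log-barrier is g-affine in the affine invariant geometry, which reduces to the multiplicativity of the determinant. Assembling the three pieces yields the joint g-convexity of $f$.
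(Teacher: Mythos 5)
Your proof is correct and follows the same route as the paper: decompose $f$, invoke Lemma~\ref{lem:g-convex-XAYA} for $\tr(X\Phi(Y))$ via the Kraus representation, and note that the $\log\det$ terms are g-affine. The only difference is that you spell out the calculation $\log\det(B\diag(e^x)B^\top) = \log\det(BB^\top) + \sum_i x_i$ justifying g-affineness, which the paper asserts without comment.
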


\begin{proof}
Since $\log\det(X)$ and $\log\det(Y)$ are g-affine, it suffices to show that $\tr(X \Phi(Y))$ is g-convex. By writing $\Phi(Y) = \sum_{i=1}^k A_i^{} Y A_i^\top$, this follows from Lemma~\ref{lem:g-convex-XAYA}.
\end{proof}

\subsection*{Acknowledgements}
T.S.~was supported by JSPS KAKENHI Grant Numbers 19K20212 and 24K21315, and JST, PRESTO Grant Number JPMJPR24K5, Japan. The work of A.U.~was supported by the Deutsche Forschungs\-gemeinschaft (DFG, German Research Foundation) – Projektnummer 506561557. This work benefited from several valuable discussions with Max von Renesse which is gratefully acknowledged.

\small
\bibliographystyle{alpha}
\bibliography{main}

\end{document}